\newcommand{\ord}{\mathrm{ord}}
\numberwithin{equation}{section}
\renewcommand{\epsilon}{\varepsilon}
\newtheorem{thm}[equation]{Theorem}
\newtheorem{lma}[equation]{Lemma}
\newtheorem{prop}[equation]{Proposition}
\newtheorem{ques}[equation]{Question}
\theoremstyle{definition}
\newtheorem*{rem*}{Remark}
\newtheorem*{note*}{Notation}
\begin{document}

\title[Trascendence of values of iterated exponentials]{Transcendence of values of the iterated\\ exponential function at algebraic points}

\author[H. Kobayashi]{ Hirotaka Kobayashi }
\address{Hirotaka Kobayashi\\
Graduate School of Mathematics\\ Nagoya University\\ Furocho\\ Chikusa-ku\\ Nagoya\\ 464-8602\\ Japan }
\curraddr{}
\email{m17011z@math.nagoya-u.ac.jp}

\author[K. Saito]{ Kota Saito }
\address{Kota Saito\\
Graduate School of Mathematics\\ Nagoya University\\ Furocho\\ Chikusa-ku\\ Nagoya\\ 464-8602\\ Japan }
\curraddr{}
\email{m17013b@math.nagoya-u.ac.jp}

\author[W. Takeda]{ Wataru Takeda }
\address{Wataru Takeda\\
Graduate School of Mathematics\\ Nagoya University\\ Furocho\\ Chikusa-ku\\ Nagoya\\ 464-8602\\ Japan }
\curraddr{}
\email{d18002r@math.nagoya-u.ac.jp}

\subjclass[2010]{Primary: 11J81,11D45, Secondary: 11J85. }

\keywords{ Iterated exponential, Gelfond-Schneider Theorem, Linndeman th}

\begin{abstract}
We say that the order of an algebraic number $A$ is the minimum of positive integers $k$ such that $A^k$ is rational. In this paper, we show that the number of algebraic numbers $A$ with order $k$ such that
\[
A,\ A^A,\ A^{A^A},\ \ldots
\]
converges to an algebraic number is approximated by $(e-1/e)\varphi (k)$. Here $\varphi(k)$ denotes Euler's totient function.
\end{abstract}

\maketitle

\section{Introduction}\label{section1}

We say that a complex number $\alpha$ is algebraic if there exists a polynomial $f(X)$ with rational coefficients such that $f(\alpha)=0$, and $\alpha$ is transcendental if $\alpha$ is not algebraic. It is a great topic to clarify the transcendence (or not transcendence) of a given number in transcendental number theory. The first example of a transcendental number was constructed by Liouville in 1844 \cite{Liouville}. He proved that $\sum_{n\geq 0}10^{-n!}$ is transcendental. At the 1900 International Congress of Mathematicians in Paris, Hilbert posed unsolved 23 problems at the time, which are called Hilbert's 23 problems. One of the problems is on transcendental number theory. This is called Hilbert's 7th problem as follows:

``{\it Prove that the expression $\alpha^\beta$, for an algebraic base $\alpha\neq 0,1$ and an irrational algebraic exponent $\beta$, e.g., the number $2^{\sqrt{2}}$ or $e^\pi=i^{-2i}$, always represents a transcendental or at least an irrational number.}''

In 1934, Gelfond and Schneider solved this problem, independently.
  
\begin{thm}[Gelfond-Schneider\cite{Gelfond1,Gelfond2, Schneider1, Schneider2}]\label{lma2}
If $\alpha \in\mathbb{A}$ and $\beta\in \mathbb{A}\setminus \mathbb{Q}$, then $\alpha^\beta$ is transcendental. 
\end{thm}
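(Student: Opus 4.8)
The plan is to run the classical auxiliary-function (transcendence-machine) argument, by contradiction; this is essentially Gelfond's proof, and I indicate Schneider's variant at the end. Suppose $\alpha$ is algebraic with $\alpha\neq 0,1$ (these values are excluded, as in Hilbert's formulation, since $0^{\beta}$ and $1^{\beta}$ are themselves algebraic) and $\beta$ is algebraic and irrational, and suppose for contradiction that $\gamma:=\alpha^{\beta}$ is also algebraic. Fix a determination of $\log\alpha\neq 0$ and put $K=\mathbb{Q}(\alpha,\beta,\gamma)$, a number field. Two facts drive everything: first, for each nonnegative integer $m$ one has $e^{m\log\alpha}=\alpha^{m}\in K$ and $e^{\beta m\log\alpha}=\gamma^{m}\in K$, so the entire functions $e^{z}$ and $e^{\beta z}$ take values in $K$ along the arithmetic progression $z=m\log\alpha$; second, $e^{z}$ and $e^{\beta z}$ are algebraically independent over $\mathbb{C}$ precisely because $\beta\notin\mathbb{Q}$.

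First I would construct an auxiliary function. For a large integer parameter $N$, and integers $M$ and $T$ chosen so that $MT$ is slightly below $(N+1)^{2}$, I apply Siegel's lemma to obtain rational integers $a_{ij}$ ($0\le i,j\le N$), not all zero and of controlled size, for which
\[
F(z):=\sum_{i=0}^{N}\sum_{j=0}^{N}a_{ij}\,e^{(i+j\beta)z}
\]
satisfies $F^{(t)}(m\log\alpha)=0$ for all $0\le t<T$ and $0\le m\le M$. This is possible because
\[
\left.\frac{d^{t}}{dz^{t}}e^{(i+j\beta)z}\right|_{z=m\log\alpha}=(i+j\beta)^{t}\,\alpha^{im}\gamma^{jm}\in K,
\]
so after multiplying by a fixed power of a common denominator of $\alpha,\beta,\gamma$ the vanishing conditions become a homogeneous linear system over $\mathbb{Z}$ (equivalently over $\mathcal{O}_{K}$, via traces) with more unknowns than equations and with coefficients of explicitly bounded size.

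Next comes the extrapolation. Since $F$ is an exponential polynomial it has order of growth at most $1$, with $|F(z)|\le e^{\kappa|z|}$ for large $|z|$ and $\kappa$ linear in $N$; and $F$ vanishes to order $T$ at each of the $M+1$ points $m\log\alpha$. The Schwarz lemma / maximum-modulus principle on a pair of concentric discs then forces $|F^{(t)}(m\log\alpha)|$ to be extremely small for $0\le t<T$ and $m$ in a larger range $0\le m\le M'$. On the other hand a suitable rational-integer multiple of $F^{(t)}(m\log\alpha)$ is an algebraic integer of $K$ all of whose conjugates are bounded, so if it were nonzero its field norm would be a nonzero rational integer, hence of absolute value $\ge 1$, contradicting the analytic bound. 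Therefore $F^{(t)}(m\log\alpha)=0$ on the enlarged range as well; iterating this step (re-optimising the parameters each round) forces $F$ to vanish, with multiplicity, along so long an initial segment of $\{m\log\alpha\}_{m\ge 0}$ that its zero-counting function exceeds what Jensen's formula allows for its growth --- unless $F\equiv 0$. But $F\equiv 0$ would say that the nonzero polynomial $\sum_{i,j}a_{ij}X^{i}Y^{j}$ vanishes identically after $X=e^{z}$, $Y=e^{\beta z}$, contradicting the algebraic independence of $e^{z}$ and $e^{\beta z}$.

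The decisive point, and the main obstacle, is the quantitative calibration: one must choose $N,M,T,M'$ and the radii of the discs so that, simultaneously, Siegel's lemma applies (the system stays underdetermined), the Schwarz bound strictly beats the ``norm $\ge 1$'' arithmetic lower bound at every round of the induction, and the iteration ends in a genuine contradiction rather than a tautology. Controlling the (at worst exponential in $N$) heights of the $a_{ij}$ and summing local contributions over the finitely many places of $K$ in the norm estimate is where the care lies; by contrast the algebraic-independence input and the closing contradiction are immediate. Schneider's independent proof reorganises the same ingredients without derivatives: it interpolates the algebraically independent functions $z$ and $e^{z\log\alpha}$ --- which take the values $a+b\beta$ and $\alpha^{a}\gamma^{b}$, both in $K$ --- at the two-dimensional grid $\{a+b\beta:0\le a,b\le N\}$, then balances analytic smallness against the arithmetic lower bound in the same way.
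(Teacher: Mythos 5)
The paper does not prove this statement at all: immediately after Theorem~\ref{lma2} it says ``We use this result without proof,'' so there is no in-paper argument to compare against --- Gelfond--Schneider is cited as a classical black box. Your sketch is therefore being measured against the standard literature proof rather than against anything the authors wrote, and on those terms it is a faithful high-level outline of Gelfond's argument: build an auxiliary exponential polynomial $F(z)=\sum a_{ij}e^{(i+j\beta)z}$ with small integer coefficients vanishing to high order at $m\log\alpha$ via Siegel's lemma, extrapolate the vanishing using the Schwarz/three-circles estimate against the ``nonzero algebraic integer has norm $\ge 1$'' lower bound, and close by noting that $F\equiv 0$ contradicts the algebraic independence of $e^{z}$ and $e^{\beta z}$, which holds exactly when $\beta\notin\mathbb{Q}$. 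You are also right to flag that the theorem as stated in the paper tacitly assumes $\alpha\neq 0,1$; as written the statement is literally false for those two values, and supplying that hypothesis is the correct reading.

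That said, what you have written is a programme, not a proof: you explicitly defer the entire quantitative calibration --- the choice of $N,M,T,M'$, the Siegel-lemma counting, the denominator and height bookkeeping over the places of $K$, and the verification that the analytic upper bound strictly beats the arithmetic lower bound at every extrapolation step --- and this is precisely where all the difficulty of the theorem lives, as you acknowledge. The final ``Jensen's formula / zero-counting exceeds the growth'' step is also stated loosely; in the standard write-ups one usually either takes a single carefully tuned extrapolation and then a Vandermonde/linear-algebra argument forcing all $a_{ij}=0$, or makes the iterative scheme converge with an explicit parameter recursion, and either of these needs to be pinned down before the contradiction is actually obtained. Since the paper itself treats the result as known and unproved, none of this affects the paper; but if the intent was to supply a self-contained proof, the sketch would need those estimates filled in to count as one.
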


We use this result without proof.

In this paper, we study the transcendence of the limit of a sequence 
\begin{equation}
\label{IE}
	 x,\quad x^x, \quad x^{x^x},\ldots.
\end{equation}
This limit is denoted by $h(x)$, and called the iterated exponential function. Formally, $h(x)$ can be written as
 \[
 h(x)=x^{x^{x^{\cdot^{\cdot^{\cdot}}} }}.
\]
The limit of a sequence (\ref{IE}) is convergent for every  $e^{-e}\leq x\leq e^{1/e}$ from the result of Barrow \cite[Theorem~5]{Barrow}, and he also proved that for every $e^{-e}\leq x\leq e^{1/e}$,   
\begin{equation}
\label{h1}
 h(x)=x^{h(x)}, \quad 1/e\leq h(x)\leq e. 
\end{equation}
Then we propose the following question:
\begin{ques} 
Is $h(A)$ transcendental or not for a given algebraic number $A$ if h(A) is convergent?  
\end{ques}
For some algebraic number $A$, the transcendence of $h(A)$ has already been known by the result of Sondow and Marques.  
\begin{prop}[{\cite[Corollary~4.2]{SondowMarques}}]\label{Prop1}
 Let $A\in [e^{-e},e^{1/e}]$. If either 
\begin{itemize}
\item[(i)] $A^n\in\mathbb{A}\setminus \mathbb{Q} $ for all $n\in \mathbb{N}$, or 
\item[(ii)] $A\in\mathbb{Q}\setminus \{1/4,1\}$,
\end{itemize}
 then $h(A)$ is transcendental, where $\mathbb{A}$ denotes the set of all algebraic numbers. 
\end{prop}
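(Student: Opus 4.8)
The plan is to argue by contradiction using Barrow's functional equation. Suppose $h(A)$ is algebraic and set $\beta := h(A)$; by~(\ref{h1}) we have $\beta = A^\beta$ and $1/e \le \beta \le e$, so $\beta$ is in particular a \emph{positive} real algebraic number. I would first dispose of the case $\beta \in \mathbb{A}\setminus\mathbb{Q}$ uniformly for both hypotheses: here the Gelfond--Schneider theorem (Theorem~\ref{lma2}), applied with base $A$ and exponent $\beta$, shows $A^\beta$ is transcendental. (We may apply it since $A$ is algebraic in either case, and $A \neq 0,1$: under (i) because $A = A^1$ is irrational, under (ii) because $A \neq 1$ is assumed and $A \ge e^{-e} > 0$.) This contradicts $A^\beta = \beta \in \mathbb{A}$.

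It then remains to rule out $\beta \in \mathbb{Q}$. Write $\beta = p/q$ in lowest terms with $p,q \ge 1$ (positivity is used here), and raise $\beta = A^\beta$ to the $q$-th power to get $A^p = \beta^q = p^q/q^q \in \mathbb{Q}$. Under hypothesis (i) this already contradicts $A^p \in \mathbb{A}\setminus\mathbb{Q}$, and we are finished. Under hypothesis (ii), $A$ is itself rational, say $A = u/v$ in lowest terms with $u,v \ge 1$; equating the two reduced fractions $u^p/v^p = p^q/q^q$ forces $u^p = p^q$. The elementary fact that $a^m = b^n$ with $\gcd(m,n)=1$ forces $a = c^n$ for some positive integer $c$, applied with $(a,m,b,n) = (p,q,u,p)$, gives $p = c^p$; since $c^p > p$ for every $c \ge 2$, necessarily $c = 1$ and hence $p = 1$. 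Therefore $\beta = 1/q$ and $A = A^p = \beta^q = 1/q^q$. Now the lower endpoint of Barrow's interval is decisive: because $e^{-e} > 1/27$, the inequality $A = 1/q^q \ge e^{-e}$ admits only $q \in \{1,2\}$, i.e.\ $A \in \{1, 1/4\}$ --- precisely the two values excluded by~(ii). This contradiction finishes the argument.

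The part that needs the most care --- and the only place where the exceptional set $\{1/4,1\}$ and the exact constant $e^{-e}$ play a role --- is this last diophantine sub-case of~(ii): one has to see that a rational fixed point of $t \mapsto A^t$ with $A$ rational can only occur for $A$ of the very special form $1/q^q$, and then isolate the finitely many such $A$ lying in $[e^{-e}, e^{1/e}]$. Everything else is a single application of Gelfond--Schneider together with uniqueness of prime factorization, so I do not expect any genuine difficulty there.
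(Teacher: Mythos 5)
Your proof is correct. The paper itself does not prove Proposition~\ref{Prop1}: it imports it directly from Sondow and Marques (cited as their Corollary~4.2), so there is no in-paper argument to compare against. What you have written is a valid self-contained proof. A few remarks on how it relates to the paper's internal machinery: your disposal of the case $h(A)\in\mathbb{A}\setminus\mathbb{Q}$ is, up to swapping the roles of base and exponent in the Gelfond--Schneider application, exactly the paper's Lemma~\ref{lma3} ($h(A)^{1/h(A)}=A$ versus your $A^{h(A)}=h(A)$; both routes work and give the same conclusion). The ``elementary fact'' you invoke in the rational sub-case of (ii) --- that $a^m=b^n$ with $\gcd(m,n)=1$ forces $a$ to be an $n$-th power --- is precisely the content of the paper's Lemma~\ref{lma1}, and your diophantine analysis reducing $A$ to the form $1/q^q$ and then cutting it down to $q\in\{1,2\}$ using $e^{-e}>1/27$ is the same style of argument the paper uses in Section~\ref{section4}. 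So while the paper treats the proposition as a black box, your proof is a clean reconstruction that fits naturally alongside the paper's own lemmas.
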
 
 However, they did not mention the case when there exists $n\in \mathbb{N}$ such that $A^n\in \mathbb{Q}$. This paper gives a new result in this unknown case.
 
A complex valued function $f(x)$ is called transcendental, if there exists no polynomial $P(y)$ with $\mathbb{C}(x)$ coefficients such that $P(f(x))\equiv0$. It is known that there are entire transcendental functions $f$ such that $f(\alpha)$ is an algebraic number for any algebraic number $\alpha$ \cite{va}.
For transcendental functions $f$, the exceptional set is defined as 
\[\{\alpha\in\mathbb{A}~|~f(\alpha)\in\mathbb{A}\}.\]
There are not many studies on this set. In this paper, we also consider the exceptional set for the iterated exponential function.
 
\begin{note*} In this paper, $\mathbb{N}$ denotes the set of all positive integers, $\mathbb{Z}$ denotes the set of all integers, $\mathbb{Q}$ denotes the set of all rational numbers, $\mathbb{R}$ denotes the set of all real numbers, $\mathbb{A}$ denotes the set of all algebraic numbers, $\mathbb{T}$ denotes the set of all transcendental numbers, $\gcd(a,b)$ denotes the greatest common divisor of integers $a$ and $b$, $a\mid b$ denotes that $b$ can be divided by $a$, and $p^k\parallel a$ denotes that $p^k\mid a$ and $p^{k+1}\nmid a$.
\end{note*}

\section{Results}\label{section2} 

We now define the function $\ord : \mathbb{A}\rightarrow \mathbb{N}\cup\{\infty\}$ as 
\[
\ord (A)=\min\{n\in\mathbb{N} \: :\:  A^n\in\mathbb{\mathbb{Q}}\} 
\]
if there exists $n\in \mathbb{N}$ such that $A^n\in \mathbb{Q}$, and define $\ord(A)=\infty$ otherwise.  We say that $\ord (A)$ is the {\it order} of an algebraic number $A$. The first goal of this paper is to prove the following theorem.

\begin{thm}\label{main1}
 Fix an integer $k\geq 2$. For every $A\in \mathbb{A}\cap [e^{-e},e^{1/e}]$ with $\ord(A)=k$, $h(A)$ is transcendental except for $A\in \mathcal{E}(k)$, where let
 \[
 \theta:=(\log2-1/e)^{-1}=3.074390\cdots,
 \]
 and
 \[
 \mathcal{E}(k):=\left\{\left(\frac{kt}{s}\right)^{\frac{s}{kt} }\ :\begin{array}{l}
 \ 1\leq t\leq \theta\log k,\ kt/e\leq s\leq kte,\ s,t \in\mathbb{N},\\ (kt)^{1/t}, s^{1/t}\in\mathbb{N},\ \gcd(kt,s)=1 
 \end{array}
 \right\}.
 \]
 \end{thm}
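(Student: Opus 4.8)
The plan is to prove the contrapositive form: if $A\in\mathbb A\cap[e^{-e},e^{1/e}]$ has $\ord(A)=k$ and $h(A)$ is algebraic, then $A\in\mathcal E(k)$. So I would start by assuming $y:=h(A)\in\mathbb A$. Since $A\ge e^{-e}>0$ we have $A>0$, and since $\ord(A)=k\ge 2$ we have $A\notin\mathbb Q$, in particular $A\neq 0,1$; by \eqref{h1} we have $A^{y}=y$ with $y\in[1/e,e]\subseteq(0,\infty)$, hence $A=y^{1/y}$. The conceptual heart is that $y$ cannot be an irrational algebraic number: otherwise the Gelfond--Schneider theorem (Theorem \ref{lma2}), applied to the algebraic base $A\neq 0,1$ and the irrational algebraic exponent $y$, would force $A^{y}$ to be transcendental, contradicting $A^{y}=y\in\mathbb A$. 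Hence $y=p/r$ for coprime $p,r\in\mathbb N$, and $A=(p/r)^{r/p}$.

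The next step is to read off $\ord(A)$ from a prime factorisation. Write $p/r=\prod_i\ell_i^{a_i}$ with distinct primes $\ell_i$ and integers $a_i\neq 0$ (those with $a_i>0$ being the prime divisors of $p$, those with $a_i<0$ the prime divisors of $r$; note $p/r\neq 1$ since $\ord(A)\ge 2$ rules out $A=1$). Then $A^{n}=\prod_i\ell_i^{na_ir/p}$ for $n\in\mathbb N$, and since the positive rationals form a free abelian group under multiplication with the primes as basis, $A^{n}\in\mathbb Q$ iff $na_ir/p\in\mathbb Z$ for every $i$; using $\gcd(p,r)=1$ this is equivalent to $p\mid n\gcd_i a_i$. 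Therefore
\[
\ord(A)=\frac{p}{\gcd\!\big(p,\ \gcd_i a_i\big)}.
\]
Imposing $\ord(A)=k$ forces $k\mid p$; set $t:=p/k\in\mathbb N$ and $s:=r\in\mathbb N$, so that $y=kt/s$, $\gcd(kt,s)=\gcd(p,r)=1$, $A=(kt/s)^{s/(kt)}$, and moreover $t=\gcd(kt,\gcd_i a_i)$ divides every $a_i$; also $p\ge k$ gives $t\ge 1$.

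It remains to verify the other defining conditions of $\mathcal E(k)$. Since $t\mid a_i$ for all $i$, we may write $kt/s=\big(\prod_i\ell_i^{a_i/t}\big)^{t}=(P/Q)^{t}$, where $P:=\prod_{a_i>0}\ell_i^{a_i/t}$ and $Q:=\prod_{a_i<0}\ell_i^{-a_i/t}$ are coprime positive integers; since $\gcd(P,Q)=1$, $\gcd(kt,s)=1$ and $kt/s=P^{t}/Q^{t}$, we get $kt=P^{t}$ and $s=Q^{t}$, i.e.\ $(kt)^{1/t}=P\in\mathbb N$ and $s^{1/t}=Q\in\mathbb N$. The inequalities $kt/e\le s\le kte$ are just the restatement of $y=kt/s\in[1/e,e]$ from \eqref{h1}. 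Finally, for $1\le t\le\theta\log k$: from $kt=P^{t}$ we get $\log k=t\log P-\log t$, so $t\le\theta\log k$ is equivalent to $\frac{\log t}{t}\le\log P-\frac1\theta$. Here $k\ge 2$ gives $P^{t}=kt\ge 2$ and hence $P\ge 2$, so by the definition $\theta=(\log 2-1/e)^{-1}$ we have $\log P-\frac1\theta\ge\log 2-\frac1\theta=\frac1e$, while $\frac{\log t}{t}\le\frac1e$ because $x\mapsto\frac{\log x}{x}$ attains its maximum $1/e$ at $x=e$. This gives $t\le\theta\log k$, so $A\in\mathcal E(k)$.

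I expect the error-prone part to be the second paragraph: the $\ord$ computation, and especially the deduction that $t\mid a_i$ for all $i$ forces both $kt$ and $s$ to be perfect $t$-th powers, where coprimality must be tracked carefully through the factorisation. The Gelfond--Schneider reduction, though it is the real idea, is essentially immediate, and the final bound is a short computation in which the exact constant $\theta$ is pinned down by matching $\log 2-1/\theta$ with $\max_{x>0}\tfrac{\log x}{x}=1/e$.
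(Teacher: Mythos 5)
Your proof is correct and reaches the conclusion by the same chain of ideas as the paper (Gelfond--Schneider forces $h(A)\in\mathbb Q$, then $k\mid p$, then integrality and the $\theta\log k$ bound), but the middle section is executed differently. The paper introduces the auxiliary representation $A=(x/y)^{1/k}$, feeds the resulting Diophantine relations $a^{bk}=x^a$, $b^{bk}=y^a$ through Lemma~\ref{lma1.5} (to get $k\mid a$) and Lemma~\ref{lma1} (to get $x^{1/b}\in\mathbb N$ and later $(kt)^{1/t},s^{1/t}\in\mathbb N$), and has to dispose of the degenerate case $x=1$ by hand. You instead work directly in the free abelian group $\mathbb Q_{>0}$ on the primes, derive the closed formula $\ord(A)=p/\gcd(p,\gcd_i a_i)$, and read everything off at once: $k\mid p$, $t=p/k$ divides every exponent $a_i$, hence $kt=P^t$ and $s=Q^t$ with $P=(kt)^{1/t}$, $Q=s^{1/t}\in\mathbb N$. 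This packages Lemmas~\ref{lma1} and \ref{lma1.5} and the $(x/y)^{1/k}$ bookkeeping into one unique-factorisation argument and absorbs the paper's $x=1$ special case automatically (it corresponds to $p=1$, which is excluded since $\ord(A)\ge 2$). The final bound $t\le\theta\log k$ is the same computation in both arguments, since your $P=(kt)^{1/t}$ is exactly the paper's $x_0=x^{1/b}$, and both rest on $P\ge 2$ together with $\max_{x>0}(\log x)/x=1/e$.
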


We show Theorem~\ref{main1} in Section~\ref{section4}. Moreover, for the case that $k$ is a square-free integer, we can characterize the set of all algebraic numbers $A$ of order $k$ satisfying that $h(A)$ is algebraic.
\begin{thm}\label{th:main2nd}
If $k\geq 3$ is square free, then
\[\left\{A\in \mathbb{A}\cap [e^{-e},e^{1/e}] :\begin{array}{l} h(A) \text{ is algebraic,}\\
\ord (A)=k
\end{array}\right\}=\left\{\left(\frac{k}{s}\right)^{\frac{s}{k}}: \begin{array}{l} k/e\leq s\leq ke,\\ 
\gcd(s,k)=1  
\end{array}\right\}.\]
\end{thm}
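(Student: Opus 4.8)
The plan is to use Theorem~\ref{main1} and then carry out a careful analysis of the set $\mathcal{E}(k)$ when $k$ is square free. By Theorem~\ref{main1}, the set on the left-hand side is exactly $\mathcal{E}(k)$, so it suffices to show that when $k\geq 3$ is square free, every element of $\mathcal{E}(k)$ has $t=1$, which collapses the defining conditions to precisely the ones on the right-hand side. So fix $A=(kt/s)^{s/(kt)}\in\mathcal{E}(k)$ with the associated $s,t\in\mathbb{N}$, $1\leq t\leq\theta\log k$, $kt/e\leq s\leq kte$, $\gcd(kt,s)=1$, and crucially $(kt)^{1/t}\in\mathbb{N}$ and $s^{1/t}\in\mathbb{N}$. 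The heart of the argument is the condition $(kt)^{1/t}\in\mathbb{N}$: writing $kt=m^t$ for some $m\in\mathbb{N}$, I would analyze the prime factorization of both sides. For each prime $p$, if $p^a\parallel k$ and $p^b\parallel t$, then $p^{a+b}\parallel kt$, so $t\mid a+b$. Since $k$ is square free, $a\in\{0,1\}$. The goal is to deduce from $t\mid a+b$ for all primes $p$ (together with $\sum_p b_p \log p = \log t$, i.e. the $b_p$ being the exponents of $t$ itself) that $t=1$.

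The key step is the following elementary claim about square-free $k$: if $kt$ is a perfect $t$-th power, then $t=1$. To see this, suppose $t\geq 2$ and let $p$ be any prime dividing $t$, with $p^b\parallel t$, $b\geq 1$. Then $p^{a+b}\parallel kt$ where $a\in\{0,1\}$, and we need $t\mid a+b$, hence $p^b\mid a+b$. Since $b\geq 1$ we have $p^b\geq 2$, and $a+b\leq 1+b$; but $p^b\mid a+b$ with $0<a+b\leq b+1\leq p^b$ forces $a+b=p^b$, so $p^b-b=a\in\{0,1\}$. For $p\geq 2$, $b\geq 1$ one checks $p^b-b\geq 1$ with equality only when $p=2,b=1$ (giving $a=1$) — and for $p\geq 3$ or $b\geq 2$ one has $p^b-b\geq 2$, a contradiction. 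So the only surviving possibility is $t=2$, $p=2$, $a=1$, i.e. $2\parallel k$. But then for every \emph{odd} prime $q\mid t$ there would be a contradiction (there are none since $t=2$), and for the prime $2$ we need $2\mid kt$ to be a perfect square: $kt=2k$ with $2\parallel k$ means $kt$ is divisible by exactly $2^2$... one must still check the odd part of $k$, which is square free, so $kt=2k$ is square free times $2$, i.e. $kt/4$ is an odd square-free integer times... this is \emph{not} a perfect square unless $k=2$, but then $kt=4$ is a square, yet we also need $s^{1/t}=s^{1/2}\in\mathbb{N}$ and $\gcd(kt,s)=\gcd(4,s)=1$, so $s$ is an odd perfect square with $4/e\leq s\leq 4e$, giving $s=9$; but $A=(4/9)^{9/4}$ then has a separate issue — actually $k=2$ is excluded by hypothesis ($k\geq 3$). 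Hence for square-free $k\geq 3$ we genuinely get $t=1$.

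With $t=1$ forced, the conditions defining $\mathcal{E}(k)$ become $1\leq 1\leq\theta\log k$ (automatic for $k\geq 3$ since $\theta\log 3>3$), $k/e\leq s\leq ke$, $k^{1/1}=k\in\mathbb{N}$ and $s^{1/1}=s\in\mathbb{N}$ (both automatic), and $\gcd(k,s)=1$, so $\mathcal{E}(k)=\{(k/s)^{s/k}:k/e\leq s\leq ke,\ \gcd(s,k)=1\}$, which is exactly the claimed right-hand side. I would also double-check that each such $(k/s)^{s/k}$ genuinely has order $k$ and lies in $[e^{-e},e^{1/e}]$ so that the set equality is between the intended sets; the interval membership follows from $k/e\leq s\leq ke$ via the bounds in (\ref{h1}) and the order claim from $\gcd(s,k)=1$ together with $k$ square free. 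The main obstacle is the number-theoretic claim in the second paragraph — ruling out every $t\geq 2$ via the local $p$-adic valuation argument — and handling the borderline $p=2$, $b=1$ case cleanly; everything else is bookkeeping with the explicit description of $\mathcal{E}(k)$.
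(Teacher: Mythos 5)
Your proposal is correct in substance, but it routes through Theorem~\ref{main1} and the explicit description of $\mathcal{E}(k)$, whereas the paper gives a self‑contained argument that never invokes $\mathcal{E}(k)$. Concretely, the paper starts from Lemma~\ref{lma3} to write $h(A)=a/b$, then writes $a=km$ and uses $(km)^b=x^m$ together with $\gcd(b,m)=1$ to force $m=1$ by $p$‑adic valuations; you instead take the already‑established condition $(kt)^{1/t}\in\mathbb{N}$ from the definition of $\mathcal{E}(k)$ and force $t=1$ by the same kind of valuation argument. Since $(km)^b=x^m$ with $\gcd(m,b)=1$ is equivalent to $(km)^{1/m}\in\mathbb{N}$, the core number‑theoretic content is identical; what your approach buys is modularity (reusing Theorem~\ref{main1} rather than rederiving its consequences), while the paper's approach buys independence from Theorem~\ref{main1}.

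One point to fix: the sentence ``By Theorem~\ref{main1}, the set on the left-hand side is \emph{exactly} $\mathcal{E}(k)$'' overstates what Theorem~\ref{main1} gives. That theorem only yields the containment
\[
\left\{A\in\mathbb{A}\cap[e^{-e},e^{1/e}] : h(A)\in\mathbb{A},\ \ord(A)=k\right\}\subseteq\mathcal{E}(k),
\]
and the paper explicitly remarks that equality is not known in general. Your proof still closes correctly because you show $\mathcal{E}(k)\subseteq\{(k/s)^{s/k}:\dots\}$ (via $t=1$) and separately establish the reverse containment of the latter into the left-hand side, so the chain of inclusions cycles around; but the equality with $\mathcal{E}(k)$ should be stated as a consequence, not a hypothesis. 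The other thing worth tightening is the second paragraph, where the handling of the borderline $p=2$, $b=1$ case meanders through an unnecessary detour about $s^{1/2}$ and $\gcd(4,s)$; the clean conclusion is simply that the surviving case forces $t=2$ with $2\parallel k$, whence $kt=2k$ has an odd prime factor occurring to an odd power (because $k\ge 3$ is square free), so $(kt)^{1/t}\notin\mathbb{N}$, contradiction. Finally, the reverse inclusion (right-hand side into left-hand side) should be made explicit rather than left as a ``double-check'': the order computation $\ord((k/s)^{s/k})=k$ requires $k$ square free exactly as you indicate, and $h\bigl((k/s)^{s/k}\bigr)=k/s$ follows from Lemma~\ref{lma4}.
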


\begin{rem*}
We also get the result for $k=2$. The explicit form is stated after the proof of Theorem \ref{th:main2nd}. We do not know whether the set $\mathcal{E}(k)$ is equal to the set of all algebraic numbers $A$ with order $k$ such that $h(A)$ is algebraic. As we discussed in Section~\ref{section1}, the case $k=1$ or $\infty$ has been already proved by Sondow and Marques (Proposition~\ref{Prop1}). We define $\mathcal{E}(1)=\{1/4,1\}$ and $\mathcal{E}(\infty)=\emptyset$. From Theorem~\ref{main1} and Proposition~\ref{Prop1}, $h(A)$ is transcendental except for $A\in \mathcal{E}(k)$ for every $k\in \mathbb{N}\cup \{\infty\}$. 
\end{rem*}

It is clear that $\mathcal{E}(k)$ is a finite set for every $k\geq 1$. Thus we can define an arithmetic function $Q(k)$ as 
\[
Q(k)=\#\{A\in \mathbb{A}\cap [e^{-e}, e^{1/e}]\: :\: h(A) \text{ is algebraic, and } \ord(A)=k \},
\]
where $\# X$ denotes the cardinality of $X$ for every finite set $X$. 

For every functions $f(k),g(k)$ and for every non-negative function $h(k)$, we define $f(k)=g(k)+O(h(k))$ as there exists some constant $C>0$ such that $|f(k)-g(k)|\leq C h(k)$. Let $\varphi(k)$ be the number of positive integers up to a given integer $k$ that are relatively prime to $k$, which is called Euler's totient function. We find that $Q(k)$ can be approximated by $(e-1/e)\varphi(k)$, furthermore the ratio $Q(k)/\varphi(k)$ goes to $e-1/e$ as $k\rightarrow \infty$. More precisely, we get the following result.
 
\begin{thm}\label{main2.5}
For every $k\geq 3$, we have 
\begin{equation}\label{f1Th2.2}
	Q(k)/\varphi(k)=e-\frac{1}{e}+ O\left( k^{-1/2}\log\log k\right). 
\end{equation}
In particular,
\[
	\lim_{k\rightarrow \infty} \frac{Q(k)}{\varphi(k)}=e-\frac{1}{e}
\]
holds.
\end{thm}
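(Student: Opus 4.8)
The plan is to estimate $Q(k)$ by counting, via Theorem~\ref{th:main2nd} (when $k$ is square free) and Theorem~\ref{main1} together with the companion result for $k$ with square factors, the number of admissible pairs producing elements of the exceptional set. The cleanest case is $k\geq 3$ square free: there $Q(k)=\#\{s\in\mathbb{N} : k/e\leq s\leq ke,\ \gcd(s,k)=1\}$, so the task reduces to counting integers in an interval of length $(e-1/e)k$ that are coprime to $k$. By a standard inclusion–exclusion over the squarefree divisors of $k$, this count equals $(e-1/e)\varphi(k) + O(2^{\omega(k)})$, where $\omega(k)$ is the number of distinct prime factors; since $2^{\omega(k)}\ll_\epsilon k^\epsilon$ and more precisely $2^{\omega(k)} = O(k^{1/2})$ while $\varphi(k)\gg k/\log\log k$, dividing by $\varphi(k)$ yields the claimed error term $O(k^{-1/2}\log\log k)$. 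I would first carry out this squarefree computation in full, since it already displays the main term and the mechanism behind the error.

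Next I would handle general $k$. Here one must count pairs $(s,t)$ as in the definition of $\mathcal{E}(k)$ — or rather the subset of such pairs for which $h$ of the corresponding number is genuinely algebraic, using whatever exact characterisation is established alongside Theorem~\ref{th:main2nd}. The ``$t=1$'' pairs contribute exactly the squarefree-type count $\#\{s : k/e\leq s\leq ke,\ (ks \text{ a perfect... }),\ \gcd(k,s)=1\}$, which again gives $(e-1/e)\varphi(k)$ up to $O(2^{\omega(k)})$; one checks that requiring $k^{1/t},s^{1/t}\in\mathbb{N}$ for $t=1$ is vacuous. The pairs with $t\geq 2$ must be shown to contribute only $O(k^{1/2}\log k)$ in absolute terms: for fixed $t\geq 2$, the condition $(kt)^{1/t}\in\mathbb{N}$ forces $k$ to be a perfect $t$-th power up to the contribution of $t$, so $kt=u^t$ for some integer $u$, and then $s$ ranges over at most $kte - kt/e + 1 \ll k$ values but additionally must satisfy $s^{1/t}\in\mathbb{N}$, i.e. $s=v^t$; the number of $t$-th powers in an interval of length $\ll k$ is $O(k^{1/t})\le O(k^{1/2})$, and summing over $1\leq t\leq\theta\log k$ gives $O(k^{1/2}\log k)$. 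Dividing by $\varphi(k)\gg k/\log\log k$ gives $O(k^{-1/2}(\log k)(\log\log k))$, which is absorbed into the stated error after adjusting constants — or, if one wants exactly $O(k^{-1/2}\log\log k)$, one notes that for $t\geq 2$ the perfect-power constraint on $k$ itself is so restrictive that the relevant $k$ form a sparse set, and a slightly more careful bound $O(k^{1/3}\log k)$ suffices.

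The main obstacle I anticipate is bookkeeping rather than any deep estimate: precisely matching the set counted by $Q(k)$ to the admissible $(s,t)$ pairs, since Theorem~\ref{main1} only asserts $h(A)$ transcendental \emph{outside} $\mathcal{E}(k)$ and does not claim every element of $\mathcal{E}(k)$ gives algebraic $h(A)$. To get a two-sided bound on $Q(k)$ — needed for the limit — one uses $Q(k)\leq \#\mathcal{E}(k)$ for the upper bound and, for the lower bound, that $\mathcal{E}(k)$ contains the explicitly-algebraic family $\{(k/s)^{s/k} : k/e\le s\le ke,\ \gcd(s,k)=1\}$ coming from the $t=1$ construction (for squarefree $k$ this is Theorem~\ref{th:main2nd}; for general $k$ one checks directly, as in the proof of that theorem, that $h((k/s)^{s/k})=(k/s)$ is rational). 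Both bounds then equal $(e-1/e)\varphi(k)$ up to the error terms above, which pins down $Q(k)$. The final step is purely the elementary inequality $\varphi(k)\geq c\,k/\log\log k$ (for $k\geq 3$), standard from Mertens' theorem, to convert the absolute $O(k^{1/2}\log k)$ (or $O(2^{\omega(k)})$) error into the relative error in~\eqref{f1Th2.2}, after which letting $k\to\infty$ is immediate.
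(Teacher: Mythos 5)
Your upper-bound argument is essentially the paper's: parametrise $\mathcal{E}(k)$ by pairs $(t,u)$ with $u=s^{1/t}$, let the $t=1$ term supply $(e-1/e)\varphi(k)$, and bound the $t\geq 2$ tail by the scarcity of $t$-th powers. Two points there need repair, though. First, your bound $O(k^{1/2}\log k)$ for the $t\geq 2$ tail, once divided by $\varphi(k)\gg k/\log\log k$, gives $O(k^{-1/2}\log k\,\log\log k)$, which is \emph{not} absorbed by the stated $O(k^{-1/2}\log\log k)$; "adjusting constants" cannot remove a factor of $\log k$. The proposed fix via "$O(k^{1/3}\log k)$" is not justified either: for $t=2$ the interval $[(2k/e)^{1/2},(2ke)^{1/2}]$ has length $\asymp k^{1/2}$, so there genuinely can be $\Theta(k^{1/2})$ eligible $s=v^2$ when $2k$ happens to be a perfect square. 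The paper avoids this by keeping the factor $\varphi(k)/k$ in front of the interval-length estimate (via the coprime-counting lemma $\sum_{\gcd(n,P)=1}1=\frac{\varphi(P)}{P}y+O(d(P))$), so the $t\geq 2$ contribution comes out as $O(\varphi(k)k^{-1/2}\log\log k + d(k)\log k)$ and the division by $\varphi(k)$ lands on exactly the claimed exponent. You drop the coprimality restriction in the $t\geq 2$ count, which is what costs you the extra $\log k$.

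The more serious gap is in your lower bound. You propose to use the family $\bigl\{(k/s)^{s/k}\ :\ k/e\leq s\leq ke,\ \gcd(s,k)=1\bigr\}$ and claim these all contribute to $Q(k)$ because $h\bigl((k/s)^{s/k}\bigr)=k/s$ is rational. That is true, but $Q(k)$ also requires $\ord\bigl((k/s)^{s/k}\bigr)=k$, and for non-squarefree $k$ this can fail: take $k=4$, $s=9$; then $A=(4/9)^{9/4}$ satisfies $A^2=(2/3)^9\in\mathbb{Q}$, so $\ord(A)=2\neq 4$, even though $\gcd(9,4)=1$ and $4/e\leq 9\leq 4e$. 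Thus your family overcounts, and $\#\bigl\{(k/s)^{s/k}\bigr\}\leq Q(k)$ is simply false. The paper's proof introduces a refined set $\mathcal{E}_0(k)$ imposing the extra condition that $s^{1/r}\notin\mathbb{N}$ for every divisor $r\neq 1$ of $k$ — exactly to rule out examples like $s=9$ — and then proves $\ord(A)=k$ on that smaller set using Lemmas~\ref{lma1} and \ref{lma1.5}. The excluded $s$ are perfect $r$-th powers, and an inclusion–exclusion over divisors $r\mid k$ shows they number $O(k^{1/2}+d(k)^2)$, which is small enough. You would need to add this exclusion and the accompanying count to make your lower bound valid for general $k$.
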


\begin{rem*}
We know that $h(A)$ is transcendental for every $A\in \mathbb{A}\cap[e^{-e},e^{1/e}]$ with $\ord(A)=\infty$ from Proposition~\ref{Prop1}. Thus we guess that $\lim_{k\rightarrow \infty} Q(k)=0$. Surprisingly, $\lim_{k\rightarrow \infty} Q(k)=\infty$ holds from Theorem~\ref{main2.5}.
\end{rem*}

\begin{thm}\label{top}
The exceptional set 
\[
\{A\in\mathbb{A}\cap [e^{-e}, e^{1/e}]\ :\ h(A) \text{ is algebraic} \}
\]
is dense in $[e^{-e},e^{1/e}]$.
\end{thm}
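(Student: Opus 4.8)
The plan is to exhibit, inside the exceptional set, an explicit one-parameter family of algebraic numbers that is already dense in $[e^{-e},e^{1/e}]$. For a rational number $r$ with $1/e\le r\le e$ put
\[
A_r:=r^{1/r}.
\]
Writing $r=a/b$ in lowest terms with $a,b\in\mathbb{N}$, one has $A_r=\bigl((a/b)^{b}\bigr)^{1/a}$, an $a$-th root of a positive rational, so each $A_r$ is a positive algebraic number. I will show that every $A_r$ lies in the exceptional set and that $\{A_r:r\in\mathbb{Q},\ 1/e\le r\le e\}$ is dense in $[e^{-e},e^{1/e}]$; this immediately gives the theorem.

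The key auxiliary object is $g(x):=(\log x)/x$. First I would record that $g$ is strictly increasing on $[1/e,e]$, since $g'(x)=(1-\log x)/x^{2}\ge 0$ there with equality only at $x=e$, and that $g(1/e)=-e$, $g(e)=1/e$; hence $g$ maps $[1/e,e]$ bijectively onto $[-e,1/e]$. Consequently $\log A_r=g(r)\in[-e,1/e]$, so $A_r\in[e^{-e},e^{1/e}]$, the interval on which $h$ is defined by Barrow's theorem. To identify $h(A_r)$, note that by $(\ref{h1})$ the value $y:=h(A_r)$ satisfies $y=A_r^{\,y}$ with $1/e\le y\le e$, that is $g(y)=\log A_r=g(r)$; since also $r\in[1/e,e]$ and $g$ is injective there, $y=r$. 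Thus $h(A_r)=r$ is rational, hence algebraic, and $A_r$ belongs to the exceptional set.

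It then remains to prove density. The map $\Phi\colon r\mapsto A_r=e^{g(r)}$ is a continuous strictly increasing bijection from $[1/e,e]$ onto $[e^{-e},e^{1/e}]$, i.e.\ a homeomorphism of intervals, and $\mathbb{Q}\cap[1/e,e]$ is dense in $[1/e,e]$; therefore its image $\{A_r:r\in\mathbb{Q},\ 1/e\le r\le e\}$ is dense in $[e^{-e},e^{1/e}]$. Since this image is contained in the exceptional set, the exceptional set is dense, proving the theorem.

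This argument has no serious obstacle; the only points needing (entirely elementary) care are the determination of the range of $g(x)=(\log x)/x$ on $[1/e,e]$ and the use of the monotonicity of $g$ there, together with the fixed-point relation $(\ref{h1})$, to conclude $h(A_r)=r$. One could instead read off the same dense family from the sets $\mathcal{E}(k)$ of the earlier theorems (taking $r=k/s$), but the direct construction above avoids the square-free hypothesis of Theorem~\ref{th:main2nd} and any appeal to Theorems~\ref{main1} and \ref{th:main2nd}.
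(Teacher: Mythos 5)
Your proof is correct, and it is genuinely simpler than the paper's. The paper reaches the same conclusion by a more roundabout route: it shows that a specific subfamily $\bigl\{(2^k/p)^{p/2^k}\bigr\}$, with $p$ an odd prime, lies inside the auxiliary sets $\mathcal{E}_0(k)$ defined in the proof of Theorem~\ref{main2.5}, and then proves density of $\{p/2^k\}$ in $(0,\infty)$ by invoking a prime-gap estimate (that consecutive primes are at most $y/\log y$ apart). That machinery is there because the authors wanted a family whose elements have controlled order $\ord(A)=k$, a condition tied to the counting argument of Theorem~\ref{main2.5}; it is not needed for mere density. You observe instead that for \emph{every} rational $r=a/b\in[1/e,e]$ the number $A_r=r^{1/r}$ is algebraic (an $a$-th root of $(a/b)^b$) and, by the fixed-point relation~(\ref{h1}) together with the injectivity of $y\mapsto(\log y)/y$ on $[1/e,e]$ — which is exactly the paper's Lemma~\ref{lma4}, re-derived inline — one has $h(A_r)=r\in\mathbb{Q}$. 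Density then follows from the triviality that $\mathbb{Q}\cap[1/e,e]$ is dense and $r\mapsto r^{1/r}$ is a homeomorphism of $[1/e,e]$ onto $[e^{-e},e^{1/e}]$. The net effect is that you replace the prime-density input with the density of the rationals, and you sidestep the $\mathcal{E}_0(k)$ bookkeeping entirely. The only small redundancy is that you re-prove Lemma~\ref{lma4} rather than citing it, but the argument is complete and sound.
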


If we fix the order of algebraic numbers, then we can find that the exceptional set is finite by Theorem~\ref{main1}. On the other hand, we see that the union 
\[
\bigcup_{k=1}^\infty \{A\in \mathbb{A}\cap [e^{-e}, e^{1/e}]\: :\: h(A) \text{ is algebraic, and } \ord(A)=k \}
\]
is dense in  $[e^{-e}, e^{1/e}]$ from Theorem~\ref{top}. 

In the above four results, we consider only the case $x$ is positive. We can extend the iterated exponential function (\ref{IE}) to $\mathbb C$ and it is known that there exists a non-real number $x$ such that (\ref{IE}) converges. This generalization seems to give a new problem, but the following theorem holds.
\begin{thm}\label{hoyoyo} If $h(x)$ is an algebraic number, then $x$ is positive. 
\end{thm}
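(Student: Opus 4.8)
The plan is to combine the functional equation satisfied by $h$, the Gelfond--Schneider Theorem (Theorem~\ref{lma2}), and the necessary condition for convergence of the tower. Write $\alpha := h(x)$; by hypothesis $x$ and $\alpha$ are both algebraic (recall that throughout the paper $x$ is an algebraic number). First I would clear the trivial cases: $x \neq 0$, since the tower is not defined there; $\alpha \neq 0$, from $\alpha = x^{\alpha}$ and $x^{0} = 1$; and if $\alpha = 1$ then $x = x^{1} = 1 > 0$ and we are done. So assume $\alpha \notin \{0,1\}$, whence also $x \neq 1$. Writing $a_{n}$ for the $n$-th term of the sequence (\ref{IE}), we have $a_{1} = x$ and $a_{n+1} = x^{a_{n}} = \exp\bigl(a_{n}\,\mathrm{Log}\,x\bigr)$, where $\mathrm{Log}$ is the principal branch of the logarithm; letting $n \to \infty$ and using continuity of $\exp$ gives the functional equation $\alpha = \exp(\alpha\,\mathrm{Log}\,x)$, i.e.\ $\alpha = x^{\alpha}$.

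The key arithmetic observation is that $\alpha$ must be rational. Indeed $x$ is algebraic with $x \neq 0,1$ and $\alpha = x^{\alpha}$ is algebraic, so if $\alpha$ were an algebraic irrational, Theorem~\ref{lma2} (which applies to any branch of the logarithm) would force $x^{\alpha}$ to be transcendental, a contradiction. Write $\alpha = p/q$ with $\gcd(p,q) = 1$, $q \geq 1$, $p \neq 0$ and $\alpha \neq 1$. Now I invoke convergence: since the orbit of $t \mapsto x^{t} = \exp((\mathrm{Log}\,x)\,t)$ started at $x$ converges to its fixed point $\alpha$, that fixed point is not repelling, so the modulus of its multiplier, which equals $|\alpha\,\mathrm{Log}\,x|$, is at most $1$. (This is the complex counterpart of Barrow's theorem; the degenerate possibility of an eventually constant sequence is dealt with separately.) From $\alpha = \exp(\alpha\,\mathrm{Log}\,x)$ we have $\alpha\,\mathrm{Log}\,x = \mathrm{Log}\,\alpha + 2\pi i k$ for some $k \in \mathbb{Z}$, and $\mathrm{Im}(\mathrm{Log}\,\alpha) \in \{0,\pi\}$ because $\alpha$ is a nonzero rational. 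If $\alpha < 0$ then $\mathrm{Im}(\alpha\,\mathrm{Log}\,x) = \pi + 2\pi k$ has modulus at least $\pi > 1$, contradicting the bound; hence $\alpha > 0$, and then $|2\pi k| = |\mathrm{Im}(\alpha\,\mathrm{Log}\,x)| \leq 1$ forces $k = 0$.

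Consequently $\alpha\,\mathrm{Log}\,x = \log\alpha$, so $\mathrm{Log}\,x = (\log\alpha)/\alpha \in \mathbb{R}$, and therefore $x = \exp\bigl((\log\alpha)/\alpha\bigr)$ is a positive real, which is the claim. (Alternatively, having $\alpha = p/q \in \mathbb{Q}$ one gets $x^{p} = (p/q)^{q} \in \mathbb{Q}$, so $x$ has finite order, and Theorem~\ref{main1} together with Proposition~\ref{Prop1} gives $x \in \mathcal{E}(\ord(x)) \subseteq \mathbb{R}_{>0}$; but the direct argument is cleaner, as it does not first require showing $x$ is real.) I expect the most delicate point to be the convergence input: one must have on record that convergence of the infinite tower itself --- not merely existence of an attracting fixed point --- implies $|h(x)\,\mathrm{Log}\,x| \leq 1$, and this is where the complex extension of Barrow's result enters. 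The hypothesis that $x$ is algebraic is also genuinely needed in the Gelfond--Schneider step: the statement fails for transcendental bases, since for any non-real algebraic $\beta$ with $|\mathrm{Log}\,\beta| < 1$ the number $\beta^{1/\beta}$ is a non-real transcendental point at which $h$ takes the algebraic value $\beta$. The rest is bookkeeping with principal logarithms.
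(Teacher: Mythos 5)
Your proof is correct and takes essentially the same route as the paper. The multiplier $\alpha\,\mathrm{Log}\,x$ you work with is exactly the parameter $t$ in the paper's Baker--Rippon parametrization $x=e^{te^{-t}}$, $h(x)=e^{t}$ (indeed $t = e^{t}\cdot te^{-t} = h(x)\,\mathrm{Log}\,x$), and the bound $|\alpha\,\mathrm{Log}\,x|\le 1$ that you flag as the delicate convergence input is precisely Carlsson's 1907 result, which the paper cites for exactly this purpose before applying the Gelfond--Schneider reduction to rationality and concluding $t\in\mathbb{R}$, hence $x>0$.
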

This theorem makes our results valid for all $\mathbb A$ with the condition that the sequence $(\ref{IE})$ converges.
We show this fact in Section \ref{section7}.

As one of generalization of these results, we also consider the case $x=\alpha^\beta$, where both $\alpha$ and $\beta$ are algebraic numbers.
If $\beta$ is not a rational number then $\alpha^\beta$ is a transcendence number by Theorem \ref{lma2}. We characterize the pair of $(\alpha,\beta)$ such that $h(\alpha^\beta)$ is an algebraic number in Section \ref{section7}.

\section{Preliminary discussion}\label{lemmas}
To prove Theorem~\ref{main1}, \ref{th:main2nd}, and  \ref{main2.5}, we show the following lemmas. 

\begin{lma}\label{lma1}
Let $x\ge 2$ be an integer, and let $a$ and $b$ be relatively prime positive integers. If $x^{a/b}$ is a positive integer, then $x^{1/b}$ is also a positive integer.
\end{lma}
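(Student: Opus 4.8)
The plan is to use unique factorization in $\mathbb{Z}$. Write $x^{a/b} = m$ for some positive integer $m \geq 1$; we want to show $x^{1/b}$ is an integer. Raising to the $b$-th power gives $x^a = m^b$. The strategy is to compare prime factorizations on both sides: for each prime $p$, let $\alpha = \ord_p(x)$ (the exponent of $p$ in $x$) and $\beta = \ord_p(m)$. Then $x^a = m^b$ forces $a\alpha = b\beta$ for every prime $p$. Since $\gcd(a,b)=1$, from $b \mid a\alpha$ we deduce $b \mid \alpha$, say $\alpha = b\gamma$ with $\gamma \geq 0$ an integer.

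Next I would assemble these local statements into a global one. Define $y := \prod_p p^{\gamma_p}$ where $\gamma_p = \ord_p(x)/b$ is the non-negative integer just produced (only finitely many $\gamma_p$ are nonzero since only finitely many primes divide $x$). Then $y$ is a positive integer, and $y^b = \prod_p p^{b\gamma_p} = \prod_p p^{\ord_p(x)} = x$. Hence $x^{1/b} = y \in \mathbb{N}$, which is exactly the claim.

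I should be slightly careful about one subtlety: the expression $x^{a/b}$ should be interpreted as the positive real $b$-th root of $x^a$, and "$x^{a/b}$ is a positive integer" means this real number lies in $\mathbb{N}$; raising that honest positive-real identity to the $b$-th power is legitimate and yields the integer equation $x^a = m^b$, after which everything is purely about factorizations in $\mathbb{Z}$. The hypothesis $x \geq 2$ just guarantees $x$ has at least one prime factor (the case $x=1$ being trivial anyway), so it is not really doing heavy lifting; the coprimality $\gcd(a,b)=1$ is the essential hypothesis, used precisely at the step $b \mid a\alpha \Rightarrow b \mid \alpha$.

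There is no real obstacle here — the argument is a routine application of unique factorization. The only thing to state cleanly is the passage from "$b$ divides each $p$-adic valuation of $x$" to "$x$ is a perfect $b$-th power," which is immediate but worth spelling out so the reader sees why the constructed $y$ is well-defined and positive. I would keep the write-up to a few lines: fix the prime, extract $a\alpha = b\beta$, invoke $\gcd(a,b)=1$, then reconstruct $x^{1/b}$ as a product of prime powers.
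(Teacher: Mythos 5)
Your proof is correct and follows essentially the same route as the paper's: set $m = x^{a/b}$, compare prime exponents to get $a\,\ord_p(x) = b\,\ord_p(m)$, invoke $\gcd(a,b)=1$ to conclude $b \mid \ord_p(x)$ for every prime $p$, and deduce $x^{1/b}\in\mathbb{N}$. You merely spell out the final reconstruction of $x^{1/b}$ as an explicit product of prime powers, which the paper leaves implicit.
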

\begin{proof}
Let $y=x^{a/b}$. Note that $y\in \mathbb{N}$. From the prime factorization, it follows that $x=p_1^{\alpha_1}\cdots p_n^{\alpha_n}$ and $y=p_1^{\beta_1}\cdots p_n^{\beta_n}$ for some prime numbers $p_1,\ldots, p_n$ and positive integers $\alpha_1,\ldots, \alpha_n, \beta_1,\ldots, \beta_n$. This yields that $\beta_j b= \alpha_j a$ for every $1\leq j\leq n$. From $\gcd(a,b)=1$, it is obtained that $b \mid \alpha_j$ for every $1\leq j\leq n$. Thus $x^{1/b}\in \mathbb{N}$.    
\end{proof}

\begin{lma}\label{lma1.5} 
Let $A\in \mathbb{A}\setminus \{0\}$ and $k\geq 1$. If $A^k\in\mathbb{Q}$, then we have $\ord(A)\mid k$.
\end{lma}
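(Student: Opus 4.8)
The plan is a one-line application of the division algorithm, exactly as one proves that the order of an element of a group divides any exponent killing it. First I would observe that the hypothesis $A^k\in\mathbb{Q}$ with $k\ge 1$ guarantees that the set $\{n\in\mathbb{N}:A^n\in\mathbb{Q}\}$ is non-empty, so $d:=\ord(A)$ is a well-defined positive integer (in particular finite). Then I would write $k=qd+r$ with integers $q\ge 0$ and $0\le r<d$, and compute $A^r=A^{k}\cdot\bigl(A^{d}\bigr)^{-q}$. Since $A\ne 0$, we have $A^{d}\in\mathbb{Q}\setminus\{0\}$, hence $\bigl(A^{d}\bigr)^{-q}\in\mathbb{Q}$; together with $A^{k}\in\mathbb{Q}$ this forces $A^{r}\in\mathbb{Q}$.

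Finally, if $r\ge 1$ then $r$ is a positive integer strictly smaller than $d$ with $A^{r}\in\mathbb{Q}$, contradicting the minimality in the definition of $d=\ord(A)$. Therefore $r=0$, i.e.\ $\ord(A)\mid k$, which is the claim. The only point requiring any care — and it is not a real obstacle — is to note that $A^{d}$ is invertible inside $\mathbb{Q}$, which is exactly why the hypothesis $A\ne 0$ is included; I expect no genuine difficulty in this argument.
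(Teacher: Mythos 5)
Your proof is correct. It differs in packaging from the paper's: the paper passes to the quotient group $\mathbb{A}^\times/\mathbb{Q}^\times$, observes that $\ord(A)$ is precisely the order of the coset $\overline{A}$ in that group, and that $\overline{A}^k=\overline{1}$, then invokes the standard fact that the order of a group element divides any exponent annihilating it. You instead prove that fact directly via the division algorithm, writing $k=qd+r$ with $0\le r<d$ and deducing $A^r\in\mathbb{Q}$ from $A^k,(A^d)^{-q}\in\mathbb{Q}$. The two arguments are mathematically the same (the division-algorithm step is exactly the proof of the group-theoretic lemma the paper cites), but yours is more self-contained and makes the role of the hypothesis $A\neq 0$ explicit — it is what lets you invert $A^d$ in $\mathbb{Q}$ — whereas the paper absorbs that hypothesis silently into the formation of the multiplicative group $\mathbb{A}^\times$. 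The paper's version is slightly slicker; yours is more elementary and would be preferable if one wanted to avoid any appeal to quotient-group language.
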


\begin{proof} We define $\mathbb{A}^\times$ and $\mathbb{Q}^\times$ as the multiplicative group $\mathbb{A}$ and $\mathbb{Q}$, respectively. Let $\overline{B}\in\mathbb{A}^\times/\mathbb{Q}^\times$ be the equivalent class of $B\in\mathbb{A}$. Then the cardinality of the cyclic group $\langle \overline{A}\rangle=\{\overline{}{A}^n\in\mathbb{A}^\times /\mathbb{Q}^\times\: :\: n\in\mathbb{Z} \}$ is equal to $\ord (A)$, and $(\overline{A})^k=\overline{1}$. By the theory of groups, we obtain $\ord(A)\mid k$.
\end{proof}

\begin{lma}\label{lma3}
Let $A\in \mathbb{A}\cap [e^{-e}, e^{1/e}]$. If $h(A)\in \mathbb{R}\setminus \mathbb{Q}$, then $h(A)\in \mathbb{T}$.
\end{lma}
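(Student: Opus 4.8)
The plan is to argue by contradiction and reduce directly to the Gelfond--Schneider theorem (Theorem~\ref{lma2}). Suppose $h(A)$ is algebraic; since by hypothesis $h(A)\in\mathbb{R}\setminus\mathbb{Q}$, it is then an \emph{irrational} algebraic number. The key input is Barrow's functional equation \eqref{h1}, namely $h(A)=A^{h(A)}$, which lets us realize the supposedly algebraic number $h(A)$ in the ``forbidden'' shape $\alpha^\beta$ with algebraic $\alpha$ and irrational algebraic $\beta$.

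Concretely, I would first verify the side conditions needed to apply Theorem~\ref{lma2} with $\alpha=A$ and $\beta=h(A)$. Since $A\in[e^{-e},e^{1/e}]$ we have $A>0$, so $A\neq 0$; and $A\neq 1$, because $A=1$ would give $h(A)=1\in\mathbb{Q}$, contradicting the irrationality of $h(A)$. Hence $A\in\mathbb{A}\setminus\{0,1\}$, while $\beta=h(A)\in\mathbb{A}\setminus\mathbb{Q}$ by the contradiction hypothesis. Theorem~\ref{lma2} then forces $A^{h(A)}$ to be transcendental. But \eqref{h1} says $A^{h(A)}=h(A)$, which was assumed algebraic --- a contradiction. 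Therefore $h(A)\in\mathbb{T}$.

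There is no real obstacle here: once Barrow's identity \eqref{h1} is available, the statement is an immediate consequence of Gelfond--Schneider. The only points demanding (routine) care are the elementary observations that $A\neq 0$ and $A\neq 1$ and that the exponent $h(A)$ is genuinely irrational --- all of which follow from the hypotheses --- together with the remark that $h(A)$ is well defined at all because $A\in[e^{-e},e^{1/e}]$, by Barrow's convergence result quoted just before \eqref{h1}.
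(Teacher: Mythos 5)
Your proof is correct and uses the same essential idea as the paper: invoke Gelfond--Schneider together with Barrow's functional equation \eqref{h1}. The paper rewrites \eqref{h1} as $h(A)^{1/h(A)}=A$ and applies Theorem~\ref{lma2} with $\alpha=h(A)$, $\beta=1/h(A)$ to conclude $A$ is transcendental, while you keep the form $A^{h(A)}=h(A)$ and take $\alpha=A$, $\beta=h(A)$ to conclude $h(A)$ is transcendental --- a symmetric and equally valid variant of the same one-line reduction.
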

\begin{proof}
Assume that $h(A)\in \mathbb{A}$. Then $1/h(A)\in \mathbb{A}\setminus \mathbb{Q}$ since $h(A)$ is so.  From Theorem~\ref{lma2}, $h(A)^{1/h(A)}$ is transcendental, but this is a contradiction by  (\ref{h1}).   
\end{proof}

\section{Proof of Theorem~\ref{main1}}\label{section4}

\begin{proof}[Proof of Theorem~\ref{main1}]
Fix an integer $k\geq 2$. Assume that $h(A)$ is rational. The goal of this proof is to show that $A\in \mathcal{E}(k)$ from Lemma~\ref{lma3}. It can be written as $h(A)=a/b$ for some $a,b\in \mathbb{N}$ with $\gcd(a,b)=1$. Since $\ord(A)=k$, it also can be written as $A=(x/y)^{1/k}$ for some $x,y\in \mathbb{N}$ with $\gcd(x,y)=1$.  From (\ref{h1}), 
\begin{equation}\label{C4}
	 \left(\frac{a}{b}\right)^{ \frac{b}{a} }={\left(\frac{x}{y}\right)}^{\frac{1}{k}}
\end{equation}
holds. From $\gcd(a,b)=\gcd(x,y)=1$ and (\ref{C4}), it follows that 
\begin{equation}\label{pf1.1}
a^{bk}=x^a,\quad b^{bk}=y^a.
\end{equation} 
If $x=1$, then it is easily seen that $a=1$ from (\ref{pf1.1}). Thus $y=b^{bk}$ holds. Therefore we obtain $(x/y)^{1/k}=1/b^b$. This is a contradiction to $\ord(A)=k$, which implies $x\geq 2$.
	 
Let $t=a/k$. We find that $k\mid a$ from $A^a\in \mathbb{Q}$ and Lemma~\ref{lma1.5}. Thus $t\in \mathbb{N}$ holds. We next show that $1\leq t\leq \theta\log k$. From $k\mid a$, $t$ is a positive integer, and $\gcd(t,b)=1$ holds.  From (\ref{pf1.1}), it is seen that $x^{t/b}/t=k$. From Lemma~\ref{lma1}, $x^{1/b}\in \mathbb{N}$ holds. Therefore $x$ can be written as $x=x_0^b$ for some positive integer $x_0$. We see that $x_0\neq 1$ from $x\geq 2$. Thus $x^{1/b}=x_0\geq 2$. Therefore
\[
 \frac{2^t}{t}\leq \frac{x^{t/b}}{t}=k,
\]
which implies that
\[
	 (\log2-1/e)\: t \leq \log \frac{2^t}{t} \leq \log k.
\]
Thus we have $1\leq t\leq \theta \log k$, where recall $\theta=(\log2-1/e)^{-1}$.

Let $s=b$. We find that $h(A)=a/b=kt/s$. From (\ref{h1}),  $kt/e\leq s\leq kte$ holds.

From the above discussion, it follows that 
\begin{gather*}
1\leq t\leq \theta \log k,\quad kt/e\leq s\leq tke,\quad s,t\in \mathbb{N}, \\
 x=a^{bk/a}=(kt)^{s/t},\quad y=b^{bk/a}=s^{s/t},\quad \gcd(kt,s)=1, \\
 A=\left(\frac{x}{y}\right)^{\frac{1}{k}}= \left(\frac{kt}{s}\right)^{\frac{s}{kt} }.
\end{gather*}
From Lemma~\ref{lma1} and $\gcd(kt,s)=1$, $(kt)^{1/t}$ and $s^{1/t}$ are positive integers.  Therefore $A\in \mathcal{E}(k)$.
\end{proof}

\section{Proof of Theorem~\ref{th:main2nd}}
 Let $f(y)=y^{1/y}$ on $1/e\leq y \leq e$. The function $f$ is an injection. Indeed $f'(y)=y^{1/y-2}(1-\log y) $ holds from logarithmic derivative. Therefore $f'(y)>0$ for every $y\in (1/e,e)$, which means that $f$ is an injection. Hence we immediately get the following lemma.
 
 \begin{lma}\label{lma4}
  Let $A\in \mathbb{A}\cap [e^{-e},e^{1/e}]$. If there exists $q\in \mathbb{Q}\cap [1/e,e]$ such that $A=q^{1/q}$, then $h(A)=q$. 
 \end{lma}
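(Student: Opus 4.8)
The plan is to deduce the lemma immediately from Barrow's functional equation (\ref{h1}) together with the injectivity of $f(y)=y^{1/y}$ on $[1/e,e]$ just verified. First I would observe that, since $q\in[1/e,e]$, the value $A=q^{1/q}=f(q)$ automatically lies in $f([1/e,e])=[e^{-e},e^{1/e}]$, so the sequence (\ref{IE}) converges at $A$; indeed this is already built into the hypothesis $A\in[e^{-e},e^{1/e}]$. Consequently $h(A)$ is well defined and, by (\ref{h1}), satisfies $h(A)=A^{h(A)}$ together with $h(A)\in[1/e,e]$.

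The key step is to rewrite the identity $h(A)=A^{h(A)}$. Raising both sides to the power $1/h(A)$ — legitimate because $h(A)>0$ — gives $h(A)^{1/h(A)}=A$, that is, $f(h(A))=A$. On the other hand, the hypothesis is precisely $f(q)=q^{1/q}=A$. Thus $f(h(A))=f(q)$, and since both $h(A)$ and $q$ lie in $[1/e,e]$, on which $f$ was shown to be strictly increasing (hence injective), we conclude $h(A)=q$.

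I do not expect any genuine obstacle here: the only point deserving explicit mention is that $h(A)$ really lands in the interval $[1/e,e]$ on which $f$ is injective, which is exactly the second assertion of (\ref{h1}); without that range restriction one could not invert $f$. This lemma is the converse companion to Theorem~\ref{main1}: it exhibits, for each admissible $q$, an algebraic $A$ for which $h(A)=q$ is rational, and it is what will make the equalities in Theorem~\ref{th:main2nd} (and the analogous statement for $k=2$) go through.
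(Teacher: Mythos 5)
Your proof is correct and follows exactly the paper's intended argument: apply Barrow's functional equation (\ref{h1}) to get $f(h(A))=A=f(q)$ with both $h(A),q\in[1/e,e]$, then invoke the injectivity of $f(y)=y^{1/y}$ on that interval (established just before the lemma). The paper states this as an immediate consequence; you have simply spelled out the same steps.
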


\begin{proof}[Proof of Theorem~\ref{main2.5}]
First we prove that the set on the left-hand side set contains the set on the right-hand.
Since $k/e\leq s \leq ke$, $(k/s)^{s/k}$ is in $[e^{-e},e^{1/e}]$. By Lemma \ref{lma4}, we have $h((k/s)^{s/k})=k/s$.
When we assume that $\mathrm{ord}(A)<k$, i.e. there is an integer $l$ such that
\begin{equation*}
1\leq l<k \quad and \quad \left(\frac{k}{s}\right)^{\frac{sl}{k}}=\frac{x}{y} \qquad (\gcd(x,y)=1),
\end{equation*}
we see that
\begin{equation*}
k^{sl}=x^k
\end{equation*}
because $\gcd(s,k)=1$ and $\gcd(x,y)=1$. Therefore $k$ divides $l$ because $\gcd(s,k)=1$ and $k$ is square free. This is a contradiction.
Hence we have $\mathrm{ord}(A)=k$.

Next we prove that the set on the left-hand side is a subset of the set on the right-hand side.
By Lemma \ref{lma3}, we can see that $h(A) \in \mathbb{Q}$.
When we put
\begin{equation*}
h(A)=\frac{a}{b} \qquad (\gcd(a,b)=1),
\end{equation*}
we can obtain
\begin{equation*}
A=\left(\frac{a}{b}\right)^{\frac{b}{a}} \qquad (a/e\leq b\leq ae).
\end{equation*}
Now we prove $a=k$.
It also can be written as $A=(x/y)^{1/k}$ with $\gcd(x,y)=1$ because $\mathrm{ord}(A)=k$.
Therefore
\begin{equation*}
\left(\frac{a}{b}\right)^{\frac{b}{a}}=\left(\frac{x}{y}\right)^{\frac{1}{k}},
\end{equation*}
and we have
\begin{equation*}
a^{kb}=x^a.
\end{equation*}
We can write $a=km \ (m\in \mathbb{N})$ and the above equation can be rewritten as
\begin{equation*}
(km)^b=x^m.
\end{equation*}
Since $k$ is square free, if a prime $p$ divides $k$ but not $m$, then $m$ divides $b$. However we put $\gcd(a,b)=1$. Hence this is a contradiction except for the case of $m=1$. If there is a prime $p$ such that 
\begin{equation*}
p \nmid k \quad \text{and} \quad p^{\alpha}\parallel m \ (\alpha \geq 1),
\end{equation*}
then $m$ divides $\alpha$ since $\gcd(b,m)=1$. Therefore $p^{\alpha}$ divides $\alpha$, but it is impossible.
Hence $k$ and $m$ are not co-prime or $m=1$.
We assume that $k$ and $m$ are not co-prime. Then there is a prime $p>2$ such that
\begin{equation*}
p^{\alpha+1} \parallel km \ (\alpha \geq 1).
\end{equation*}
Since $\gcd(b,m)=1$, $m$ divides $\alpha+1$ and therefore $p^{\alpha}$ divides $\alpha+1$. But this is impossible.
Therefore $m=1$. 
\end{proof}

For the case $k=2$, there is only one exceptional element $(2/3)^{9/2}$ in the set on the left-hand side.

\section{Proof of Theorem~\ref{main2.5}}

In order to estimate the value of $Q(k)$, we need the evaluations of arithmetic functions. Let $d(n)$ be the number of divisors of $n$, $\omega (n)$ be the number of distinct prime factors of $n$, and $\gamma$ be the Euler-Mascheroni constant.

\begin{lma} We have the following facts.\\
\cite[Theorem~2.9]{MontgomeryVaughan} For every $n\geq 3$, we obtain
\begin{equation}\label{ArithFunc3}
	\varphi (n)\geq \frac{n}{\log\log n} \left(e^{-\gamma} +O\left(\frac{1}{\log\log n}\right) \right).
\end{equation}
{\rm \cite[Theorem~2.11]{MontgomeryVaughan}} For every $n\geq 3$, 
\begin{equation} \label{ArithFunc2.1}
	\log d(n)\leq \frac{\log n}{\log\log n}\left(\log2 +O\left(\frac{1}{\log\log n}\right)\right),
\end{equation}
{\rm\cite[Theorem~3.1]{MontgomeryVaughan}} Let $P$ be a positive integer. For any $x\in\mathbb{R}$, and any $y\geq 0$,
\begin{equation}\label{ArithFunc1}
	\sum_{\substack{x<n\leq x+y \\ \gcd(n,P)=1}}1=\frac{\varphi(P)}{P}y + O\left(2^{\omega(P)}\right).
\end{equation}
\end{lma}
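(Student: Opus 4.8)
All three estimates are classical facts of analytic number theory, each quoted above with a precise reference to the monograph of Montgomery and Vaughan, so the plan is simply to invoke those three theorems; what I would record for completeness are the short arguments behind them, with attention to the exact shape of the error terms, since it is in that normalized form that the estimates are applied in the proof of Theorem~\ref{main2.5}. For the lower bound \eqref{ArithFunc3} I would start from the identity $\varphi(n)/n=\prod_{p\mid n}(1-1/p)$ and use that this product is smallest, among $n$ with a fixed number of prime factors, when those primes are as small as possible, so the extremal $n$ are the primorials $N_z=\prod_{p\le z}p$. For $N_z$ the prime number theorem gives $\log N_z=\vartheta(z)=z(1+o(1))$, hence $\log z=\log\log N_z+o(1)$, while Mertens' third theorem gives $\prod_{p\le z}(1-1/p)=e^{-\gamma}(\log z)^{-1}\bigl(1+O(1/\log z)\bigr)$; substituting and tracking the remainder yields \eqref{ArithFunc3} for primorials, and the monotonicity of $\prod_{p\mid n}(1-1/p)$ in the set of prime divisors of $n$ extends it to all $n\ge 3$.

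For the upper bound \eqref{ArithFunc2.1} I would write $n=\prod_i p_i^{a_i}$, so that $\log d(n)=\sum_i\log(a_i+1)$, and split the primes $p_i$ into those at most $\log n$ and those exceeding $\log n$. For the large primes $a_i\le\log n/\log p_i$ is bounded and contributes a negligible amount; for the small primes there are at most $\pi(\log n)$ of them, each contributing $O(\log\log n)$, and optimizing the resulting bound — the extremal configuration again being essentially a primorial — produces the main term $(\log 2)\,\log n/\log\log n$ together with a secondary term $O\bigl((\log n)(\log\log n)^{-2}\bigr)$, which is \eqref{ArithFunc2.1}.

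For \eqref{ArithFunc1} I would expand the condition $\gcd(n,P)=1$ by M\"obius inversion over the divisors of $P$:
\[
\sum_{\substack{x<n\le x+y\\ \gcd(n,P)=1}}1
=\sum_{d\mid P}\mu(d)\left(\left\lfloor\frac{x+y}{d}\right\rfloor-\left\lfloor\frac{x}{d}\right\rfloor\right)
=y\sum_{d\mid P}\frac{\mu(d)}{d}+O\!\left(\sum_{d\mid P}|\mu(d)|\right).
\]
The first sum equals $y\prod_{p\mid P}(1-1/p)=\varphi(P)y/P$, and the second equals $2^{\omega(P)}$, since $\mu(d)\ne0$ only for the squarefree divisors of $P$; this gives \eqref{ArithFunc1}.

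The content of the lemma is thus a matter of assembly rather than of proving anything genuinely new, and the only steps I expect to need real care are bookkeeping ones: for \eqref{ArithFunc3} and \eqref{ArithFunc2.1} one must confirm that the extremal $n$ are, up to negligible error, primorials and then carry the secondary $O$-terms correctly through the change of variable between $z$ and $\log\log n$, so that the estimates come out in exactly the stated normalized form; and for \eqref{ArithFunc1} one must observe that the M\"obius sum has only $2^{\omega(P)}$ nonzero terms, not the potentially larger $d(P)$, which is precisely what makes the error term $O(2^{\omega(P)})$.
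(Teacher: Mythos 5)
The paper gives no argument for this lemma at all: it is quoted verbatim, with citations, from Montgomery--Vaughan (Theorems 2.9, 2.11 and 3.1), so your top-line plan --- simply invoke those three theorems --- is exactly what the paper does, and it is sufficient. Your supplementary sketches for \eqref{ArithFunc3} and \eqref{ArithFunc1} are the standard proofs and are correct: Mertens' third theorem applied to primorials plus the monotonicity of $\prod_{p\mid n}(1-1/p)$ gives the $\varphi$ lower bound, and the M\"obius expansion over divisors of $P$, with the observation that $\sum_{d\mid P}|\mu(d)|=2^{\omega(P)}$, is precisely how the error term $O(2^{\omega(P)})$ (rather than $O(d(P))$) arises.

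The one place your bookkeeping goes wrong is the sketch of \eqref{ArithFunc2.1}. With the split at $\log n$, the primes exceeding $\log n$ are \emph{not} negligible: writing $a+1\le 2^{a}$ and using $\sum_{p>\log n,\,p\mid n}a_p\log p\le\log n$ gives $\sum_{p>\log n}\log(a_p+1)\le(\log 2)\log n/\log\log n$, i.e.\ exactly main-term size, while the crude bound ``at most $\pi(\log n)$ small primes, each contributing $O(\log\log n)$'' only yields $O(\log n)$ for the small primes, which is too weak to isolate the constant $\log 2$. The standard (Wigert/Montgomery--Vaughan) argument runs the split the other way around and with a lower threshold, e.g.\ $z=\log n/(\log\log n)^{2}$: the primes $p>z$ give the main term $(\log 2)\log n/\log\log n\,(1+O(1/\log\log n))$ via $a+1\le 2^{a}$ and $\sum_{p>z}a_p\le\log n/\log z$, while the primes $p\le z$ number at most $\pi(z)\ll\log n/(\log\log n)^{3}$ and each contribute $O(\log\log n)$, which is the $O(\log n/(\log\log n)^{2})$ error. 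Since the lemma is justified by citation in any case, this does not affect the validity of your proposal, but the sketch as written would not compile into a correct proof of \eqref{ArithFunc2.1}.
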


\begin{rem*}
From (\ref{ArithFunc2.1}), there exists $C>0$ such that for every $n\geq 3$
\begin{equation}\label{ArithFunc2}
	d(n)\leq \exp\left( \frac{C\log n}{\log\log n} \right)
\end{equation}
holds. We can take $C=1.5379$ from the result of Nicolas and Robin \cite{NicolasRobin}, but we do not use this explicit value. 

Since
$2^{\omega(P)}\leq d(P)$ holds, thus by (\ref{ArithFunc1}), we have
\begin{equation}\label{ArithFunc4}
	\sum_{\substack{x<n\leq x+y \\ \gcd(n,P)=1}}1=\frac{\varphi(P)}{P}y + O\left(d(P)\right).
\end{equation}
For every function $f(k)$ and for every non-negative function $g(k)$, we define $f(k)\ll g(k)$ as $f(k)=O(g(k))$.  
\end{rem*}

\begin{proof}[Proof of Theorem~\ref{main2.5}]
 Let $\mathcal{E}(k)$ be the set in Theorem~\ref{main1}. From Theorem~\ref{main1} and (\ref{ArithFunc4}), it follows that
\begin{align*}
	Q(k)
	&\leq \#\mathcal{E}(k) \\
	& \leq \#\{(u,t)\in \mathbb{N}^2\colon 1\leq t\leq \theta\log k,\ (kt/e)^{1/t}\leq u\leq (kte)^{1/t},\ \gcd(k,u)=1  \} \\
	&=\sum_{1\leq t\leq \theta\log k} \sum_{\substack{\quad (tk/e)^{1/t}\leq u\leq (tke)^{1/t} \\ \gcd(k,u)=1}}1 \\
	&=\sum_{1\leq t\leq \theta\log k} \left(\left((tke)^{1/t} -\left(\frac{tk}{e}\right)^{1/t}\right)\frac{\varphi(k)}{k} + O(d(k))\right)\\
	&=\left(e-\frac{1}{e}\right)\varphi(k)+ \left(\sum_{2\leq t\leq \theta\log k} \left((tke)^{1/t} -\left(\frac{tk}{e}\right)^{1/t}\right)\frac{\varphi(k)}{k}  \right)+O(d(k)\log k ).
\end{align*}
By the mean value theorem and the fact $t^{1/t}$ is bounded, the middle term is dominated by
\begin{align*}
\frac{\varphi(k)}{k} \sum_{2\leq t\leq \theta\log k} \left(ke-\frac{k}{e}\right)\frac{(k/e)^{1/t-1}}{t}\ll \varphi(k)k^{-1/2}\log\log k.
\end{align*}
By (\ref{ArithFunc3}) and (\ref{ArithFunc2}), we have
\begin{align*}
\varphi(k)k^{-1/2}\log\log k+d(k)\log k \ll \varphi(k) k^{-1/2}\log \log k.  
\end{align*}
Therefore there exists a constant $C_1>0$ such that
\[
	Q(k)/\varphi(k)-\left(e-\frac{1}{e}\right)\leq C_1  k^{-1/2}\log\log k.
\]
We next find a lower bound of $Q(k)$.  Let 
\begin{equation}\label{Ezero}
	\mathcal{E}_0(k)=\left\{ \left(\frac{k}{s}\right)^{\frac{s}{k}}\left| \begin{array}{l}
	k/e \leq s\leq ek,\ s\in\mathbb{N},\ \gcd(k,s)=1,\\
	r\mid k \text{ and } r\neq 1 \Rightarrow s^{1/r}\notin \mathbb{N} 
	\end{array}\right.\right\}
\end{equation}
for every $k\geq 3$. Then $\mathcal{E}_0(k)\subset[e^{-e},e^{1/e}]$ holds for every $k\geq 3$. Indeed, since $f(x)=x^{1/x}$ is increasing on $x\in [1/e,e]$, we have
\[
	e^{-e}\leq \left(\frac{k}{s}\right)^{\frac{s}{k}}\leq e^{1/e} 
\] 
for every $1/e\leq s/k\leq e$. Therefore $h(A)$ can be defined for every $A\in\mathcal{E}_0$. Fix $A\in \mathcal{E}_0$ and write $A=(k/s)^{s/k}$. We next show that $\ord(A)=k$. It follows that
\[
	 \left(\frac{k}{s}\right)^{\frac{s\cdot \ord(A)}{k}}=A^{\ord(A)}=\frac{x}{y}
\] 
for some relatively prime positive integers $x$ and $y$. From Lemma~\ref{lma1.5}, we obtain $\ord(A)\mid k$. Since
\[
	s^\frac{s}{k/\ord(A)}=y
\]
from $\gcd(x,y)=\gcd(k,s)=1$, it follows that $s^{\frac{1}{k/\ord(A)}} \in \mathbb{N}$ from Lemma~\ref{lma1} and $\gcd(k,s)=1$.   Therefore, the definition of $\mathcal{E}_0(k)$ leads $\ord(A)=k$. Furthermore, $h(A)$ is rational from Lemma~\ref{lma4}. Hence we get the evaluation
\[
	\#\mathcal{E}_0(k)\leq Q(k).
\]
We now find a lower bound of $\#\mathcal{E}_0(k)$. It is obtained that
\begin{align*}
\#\mathcal{E}_0(k) 
&= \sum_{\substack{k/e\leq s\leq ek \\ \gcd(k,s)=1}} 1 
- \sum_{\substack{(k/e)^{1/r}\leq s^{1/r}\leq (ek)^{1/r} \\ \gcd(k,u)=1\\s^{1/r}\in\mathbb N}} 1\\
 &\ge \sum_{\substack{k/e\leq s\leq ek \\ \gcd(k,s)=1}} 1 
- \sum_{\substack{r\mid k\\ r\neq 1}}  \sum_{\substack{(k/e)^{1/r}\leq u\leq (ek)^{1/r} \\ \gcd(k,u)=1}} 1.
\end{align*} 
From (\ref{ArithFunc4}), the first sum is equal to 
\begin{equation}\label{firstterm}
	\left(e-\frac{1}{e}\right)\varphi(k) +O\left( d(k) \right),
\end{equation}
and the second sum is equal to
\begin{align}\label{secondterm1}
\sum_{\substack{r\mid k\\ r\neq 1}} \left((ek)^{1/r}-\left(\frac{k}{e}\right)^{1/r}\right) \frac{\varphi(k)}{k}   +O(d(k)^2).
\end{align}
By the mean value theorem and the estimate (\ref{ArithFunc2}), this sum is dominated by 
\begin{align}\label{secondterm2} 
\frac{\varphi(k)}{k} \sum_{\substack{r\mid k\\ r\neq 1}}
\frac{1}{r}(k/e)^{1/r} &\ll  \frac{\varphi(k)}{k}k^{1/2}\sum_{r|k}\frac{1}{r}\leq
\frac{\varphi(k)}{k}k^{1/2} \frac{k}{\varphi(k)}=k^{1/2}.
\end{align}
Therefore, by combining (\ref{firstterm}),(\ref{secondterm1}), and (\ref{secondterm2}) , we have
\[
\# \mathcal{E}_0(k) /\varphi(k)=e-\frac{1}{e}+O(d(k)^2/\varphi(k)+k^{1/2}/\varphi(k)). 
\]
Hence, by (\ref{ArithFunc3}) and (\ref{ArithFunc2}) there exists $C_2>0$ such that
\[
-C_2k^{-1/2}\log\log k\leq Q(k)/\varphi(k) -\left(e-\frac{1}{e}\right)
\]
for every $k\geq 3$ from (\ref{ArithFunc2}). Therefore we obtain 
\[
Q(k)/\varphi(k)=e-\frac{1}{e}+O\left(k^{-1/2}\log\log k \right).
\]
Furthermore, we find that $Q(k)/\varphi(k)\rightarrow e-\frac{1}{e}$ as $k\rightarrow \infty$ from (\ref{ArithFunc3}).

\end{proof}

\begin{proof}[Proof of Theorem~\ref{top}]
Let $\mathcal{E}=\{A\in\mathbb{A}\cap [e^{-e}, e^{1/e}]\ :\ h(A) \text{ is algebraic} \}$, and let $f(x)=1/x^x$. By the definition (\ref{Ezero}), we have
\[
    \left\{ f(p/2^k)  : \ k\geq 2, \ p \text{ is odd prime}, 1/e\leq p/2^k\leq e \right\} \subseteq  \bigcup_{k=3}^\infty  \mathcal{E}_0(k) \subseteq  \mathcal {E}. 
\]
Note that the function $f(x)=1/x^x$ is a homeomorphism from $[1/e,e]$ into $[e^{-e}, e^{1/e}]$. Thus it is sufficient to show that the set
\[ 
\mathcal{F}:= \left\{ p/2^k\in \mathbb{Q}  : \ k\geq 2,\ p \text{ is odd prime} \right\}
\]
is dense in $(0,\infty)$. Here fix any real numbers $x>0$ and $\epsilon>0$. It is clear from \cite[Theorem~6.9]{MontgomeryVaughan} that if $y$ is sufficiently large real number, then there exists an odd prime number $p$ such that $p\in [y,y+y/\log y]$. Therefore if we choose a sufficiently large integer $k=k(x,\epsilon)$, then we can find an odd prime number $p$ such that
\[
(x-\epsilon)2^{k}< p< (x+\epsilon)2^{k}.    
\]
Then the following inequality holds:
\[
|x-p/2^k|<\epsilon, 
\]
which implies that $\mathcal{F}$ is dense in $(0,\infty)$. 
\end{proof}

\section{Iterated exponential on $(0,e^{-e})$}

Barrow showed that $h(x)$ does not converge on the interval $(0,e^{-e})$, but he proved that sequences of functions
\begin{equation}\label{oddexp}
x,\quad x^{x^x},\quad x^{x^{x^{x^x}}},\cdots 
\end{equation}
and
\begin{equation}\label{evenexp}
x^x,\quad x^{x^{x^x}}, \quad  x^{x^{x^{x^{x^x}}}},\cdots
\end{equation}
are convergent for every $x\in(0,e^{-e})$. We define $h_o(x)$ and $h_e(x)$ as the limits of the above sequences (\ref{oddexp}) and (\ref{evenexp}), respectively. We say that $h_o(x)$ is the odd iterated exponential function and $h_e(x)$ is the even iterated exponential function. Note that these functions can be defined on $(0,e^{-e})$. Barrow proved that
\begin{gather}\label{OddEvenEqs}
h_o(x)=x^{h_e(x)},\quad h_e(x)=x^{h_o(x)}, \quad 0<h_o(x)<\frac{1}{e}<h_e(x)<1
\end{gather}
 for every $x\in (0,e^{-e})$. We define
 \[
 R(k)=\#\{A\in \mathbb{A}\cap (0,e^{-e})\::\: \text{$h_o(A)$ and $h_e(A)$ are algebraic, and } \ord(A)=k\}.
 \]
\begin{ques}\label{finiteness} Is $R(k)$ finite? If it is true, can we find an asymptotic formula of $R(k)$?
\end{ques}

The goal of this section is to give the affirmative answer to Question~\ref{finiteness}. More precisely, we get the following results:

\begin{thm}\label{main6.1} Let $A$ be an algebraic number in the interval $(0,e^{-e})$. Then $h_o(A)$ and $h_e(A)$ are algebraic if and only if there exists a positive integer $v$ such that
\begin{equation}\label{FormulaOfA}
 A=\left(\frac{v}{v+1}\right)^{(v+1)\left(\frac{v+1}{v}\right)^{v}}.
\end{equation}
\end{thm}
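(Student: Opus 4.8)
The plan is to prove both implications, reducing first to the case in which $h_o(A)$ and $h_e(A)$ are rational. Since $A\in(0,e^{-e})\subset\mathbb A\setminus\{0,1\}$, Theorem~\ref{lma2} applies to $A$: from $h_o(A)=A^{h_e(A)}$ in (\ref{OddEvenEqs}), if $h_e(A)$ were an algebraic irrational then $A^{h_e(A)}$ would be transcendental, contrary to hypothesis; hence $h_e(A)\in\mathbb Q$, and symmetrically $h_o(A)\in\mathbb Q$ (conversely, the numbers exhibited below are rational, hence algebraic). So write $h_o(A)=a/b$ and $h_e(A)=c/d$ in lowest terms, with $0<a/b<1/e<c/d<1$ and $A=(a/b)^{d/c}=(c/d)^{b/a}$ by (\ref{OddEvenEqs}). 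Raising $h_o(A)^{1/h_e(A)}=h_e(A)^{1/h_o(A)}$ to the power $h_o(A)h_e(A)$ gives $h_o(A)^{h_o(A)}=h_e(A)^{h_e(A)}$, that is, $(a/b)^{ad}=(c/d)^{cb}$ after raising to the power $bd$.

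Since $\gcd(a,b)=\gcd(c,d)=1$, comparing numerators and denominators of these reduced fractions gives $a^{ad}=c^{cb}$ and $b^{ad}=d^{cb}$. I would then argue with prime factorizations in the style of the proof of Lemma~\ref{lma1}: writing $g=\gcd(ad,cb)$, $ad=gu$, $cb=gw$ with $\gcd(u,w)=1$ and extracting $g$-th roots gives $a^u=c^w$ and $b^u=d^w$, hence $a=M^w$, $c=M^u$, $b=N^w$, $d=N^u$ for positive integers $M,N$, and $\gcd(M,N)=1$ follows from $\gcd(a,b)=1$. Here $a/b<1$ forces $M<N$, and $a/b<c/d$ then forces $u<w$; substituting back into $ad=gu$ and $cb=gw$ (and using $\gcd(M,N)=1$ to compute $g=(MN)^u$) collapses everything to $M^{w-u}=u$ and $N^{w-u}=w$, whence $N^{w-u}-M^{w-u}=w-u$. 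For positive integers $M<N$ this is possible only when $w-u=1$ (the left side grows strictly faster otherwise), giving $N=M+1$, $u=M$, $w=M+1$. With $v:=M\in\mathbb N$ one gets $h_o(A)=(v/(v+1))^{v+1}$ and $h_e(A)=(v/(v+1))^{v}$, and then $A=h_o(A)^{1/h_e(A)}$ together with $1/h_e(A)=((v+1)/v)^{v}$ is exactly (\ref{FormulaOfA}); this proves necessity.

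For sufficiency, fix $v\in\mathbb N$, let $A$ be given by (\ref{FormulaOfA}), and set $p=(v/(v+1))^{v+1}$ and $q=(v/(v+1))^{v}$, both rational. Using $v\log(1+1/v)<1$ one checks $0<p<1/e<q<1$, and a direct substitution using $((v+1)/v)^{v}(v/(v+1))^{v}=1$ gives $A^q=p$ and $A^p=q$. (One also checks $A\in(0,e^{-e})$, equivalently $g(v):=(v+1)((v+1)/v)^{v}\log((v+1)/v)>e$: the logarithmic derivative of $g$ equals $\log(1+1/v)-\big(v(v+1)\log(1+1/v)\big)^{-1}$, which is negative because $\log(1+t)<t/\sqrt{1+t}$ for $t>0$ yields $(\log(1+1/v))^{2}<1/\big(v(v+1)\big)$; thus $g$ is strictly decreasing with $\lim_{v\to\infty}g(v)=e$, so every value of (\ref{FormulaOfA}) lies in $(0,e^{-e})$, consistently with the hypothesis.) By (\ref{OddEvenEqs}) we have $A^{A^{h_o(A)}}=h_o(A)$ and $A^{A^{h_e(A)}}=h_e(A)$, so $h_o(A)$, $h_e(A)$, $p$, $q$, and the unique $\tau$ with $A^\tau=\tau$ are all fixed points of $\psi(t):=A^{A^{t}}$.

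It remains to identify $p$ with $h_o(A)$ and $q$ with $h_e(A)$. First I would locate the fixed points: $0<p<1/e<q$; and since $t\mapsto A^t$ is decreasing, (\ref{OddEvenEqs}) gives $0<h_o(A)<\tau<h_e(A)<1$, while $\tau<1/e$ because $\tau=A^\tau<(e^{-e})^{\tau}=e^{-e\tau}$, which is $<1/e$ whenever $\tau\geq1/e$, a contradiction. Then I would count: a computation shows $\psi''$ has the sign of $-(1+\log\psi(t))$, so $\psi$ is convex where $\psi(t)<1/e$ and concave where $\psi(t)>1/e$; as $\psi$ is increasing this makes $\psi'$ first increasing then decreasing, so $\{t:\psi'(t)>1\}$ is an interval and $\psi(t)-t$ has at most three zeros. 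But $h_o(A),\tau,p$ are fixed points of $\psi$ lying below $1/e$ while $h_e(A)$ is one lying above $1/e$; since $\tau\neq h_o(A)$ (else $A^{h_o(A)}=h_o(A)\neq h_e(A)$) and $\tau\neq p$ (else $A^p=p\neq q$), having at most three forces $p=h_o(A)$, and then $q=A^p=A^{h_o(A)}=h_e(A)$. Both limits are thus rational, which completes the proof. The steps I expect to be most delicate are the factorization bookkeeping reducing necessity to the Diophantine identity $N^\ell-M^\ell=\ell$, and, on the sufficiency side, the convexity-concavity analysis bounding the number of fixed points of $\psi=A^{A^{(\cdot)}}$ (together with the monotonicity of $g$); the latter is, I think, the genuine technical heart of the argument.
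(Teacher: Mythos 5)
Your proof is correct, and while its skeleton matches the paper's (reduce to rationality via Gelfond--Schneider, solve a symmetric Diophantine condition, then verify the converse), it diverges in two substantive ways. For necessity, the paper quotes Hurwitz's classification of rational solutions of $x^y=y^x$ (Lemma~\ref{x^y=y^x}); you instead re-derive it from scratch by the prime-factorization bookkeeping $a^{ad}=c^{cb}$, $b^{ad}=d^{cb}$, ultimately reducing to the Diophantine identity $N^{\ell}-M^{\ell}=\ell$, which forces $\ell=1$. This makes the argument self-contained at the cost of length; the paper's route is shorter but outsources the key Diophantine step. For sufficiency, the paper exhibits $x=(1+1/v)^{-1-v}$, $y=(1+1/v)^{-v}$, checks that they satisfy $x=A^{y}$, $y=A^{x}$, $0<x<1/e<y<1$, and then declares $h_o(A)=x$, $h_e(A)=y$ ``from (\ref{OddEvenEqs}).'' That last step tacitly uses uniqueness of the $2$-cycle of $t\mapsto A^{t}$ on $(0,1)$, a fact not contained in the three relations (\ref{OddEvenEqs}) as quoted (it is implicit in Barrow's original analysis). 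Your convexity argument---showing $\psi(t)=A^{A^{t}}$ has second derivative of the sign of $-(1+\log\psi(t))$, hence is convex-then-concave, hence has at most three fixed points, and then locating $h_o(A),\tau,p$ below $1/e$ and $h_e(A),q$ above to force $p=h_o(A)$, $q=h_e(A)$---is a clean, explicit way to close that gap, and I checked the key computation $\psi''=(\log A)^{3}A^{t}\psi(t)\bigl(1+(\log A)A^{t}\bigr)$ as well as the inequality $\log(1+t)<t/\sqrt{1+t}$ you use to show $g$ is decreasing. In short, your proof is more laborious but strictly more complete: it neither relies on Hurwitz nor on the unstated uniqueness of the $2$-cycle, and the fixed-point count is the genuinely new contribution relative to the paper's version.
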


From the above theorem, it follows that
\[
    R(k)=\#\left\{ v\in \mathbb{N}\ \colon\ \ord\left( \left(\frac{v}{v+1}\right)^{(v+1)\left(\frac{v+1}{v}\right)^{v}}\right)=k \right\}.
\]

\begin{thm}[the answer to Question~\ref{finiteness}]\label{main6.2}
We have
\[
R(k)=
\begin{cases}
 1 & \exists v\in \mathbb{N} \text{ {\rm s.t.} } k=v^v,\\
 0 & \text{\rm  otherwise.}
\end{cases}
\]
\end{thm}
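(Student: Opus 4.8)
The plan is to combine Theorem~\ref{main6.1} with a divisibility analysis of the order of the explicit algebraic number
\[
A_v=\left(\frac{v}{v+1}\right)^{(v+1)\left(\frac{v+1}{v}\right)^{v}}
\]
appearing in \eqref{FormulaOfA}. By Theorem~\ref{main6.1}, $R(k)$ counts exactly those $v\in\mathbb N$ with $\ord(A_v)=k$, so it suffices to (i) compute $\ord(A_v)$ in closed form, and (ii) check that $v\mapsto \ord(A_v)$ is injective. I claim $\ord(A_v)=v^v$, which immediately gives the stated case split: if $k=v^v$ for some (necessarily unique, by strict monotonicity of $v\mapsto v^v$ on $\mathbb N$) $v$, then $R(k)=1$; otherwise no $v$ works and $R(k)=0$.

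First I would reduce the exponent of $A_v$ to lowest terms. Write the exponent $(v+1)\big(\tfrac{v+1}{v}\big)^{v}=\tfrac{(v+1)^{v+1}}{v^{v}}$, so that
\[
A_v=\left(\frac{v}{v+1}\right)^{\frac{(v+1)^{v+1}}{v^{v}}}.
\]
Since $\gcd(v,v+1)=1$, we also have $\gcd\big((v+1)^{v+1},v^{v}\big)=1$, so the rational exponent $\tfrac{(v+1)^{v+1}}{v^{v}}$ is already in lowest terms with denominator $v^v$. To find $\ord(A_v)$ I would raise $A_v$ to a power $k$ and ask when the result is rational: using $\gcd(v,v+1)=1$ and the same prime–factorization bookkeeping as in Lemma~\ref{lma1} and Lemma~\ref{lma1.5}, $A_v^{k}\in\mathbb Q$ forces $v^{v}\mid k\cdot(v+1)^{v+1}$ in the relevant exponent sense, hence $v^{v}\mid k$; and conversely $A_v^{v^{v}}=\big(\tfrac{v}{v+1}\big)^{(v+1)^{v+1}}\in\mathbb Q$. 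Thus $\ord(A_v)=v^{v}$, as claimed. One boundary remark: for $v=1$ we get $A_1=(1/2)^{4}=1/16\in(0,e^{-e})$ with $\ord(A_1)=1$, consistent with $k=1^1=1$.

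The main obstacle I anticipate is the careful prime-by-prime verification in step~(i) that no smaller power than $v^{v}$ already lands in $\mathbb Q$ — i.e.\ that $v^v$ is genuinely the order and not merely a multiple of it. The coprimality $\gcd(v,v+1)=1$ is what makes this clean: for any prime $p\mid v$ with $p^{a}\parallel v$, the exponent of $p$ in the "numerator" $v^{(v+1)^{v+1}}$ is $a(v+1)^{v+1}$ while in the "denominator" $(v+1)^{v\cdot v^{v}/k}$ it is $0$, so rationality of $A_v^{k}$ requires $v^{v}/k$ to clear the denominator $v^{v}$ coming from the reduced exponent, forcing $v^{v}\mid k$. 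After that, injectivity of $v\mapsto v^{v}$ is immediate since it is strictly increasing on $\mathbb N$, and the case analysis in the statement follows. I would also note that the finiteness of $R(k)$ — the affirmative answer to Question~\ref{finiteness} — drops out for free, since $R(k)\in\{0,1\}$.
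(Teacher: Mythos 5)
Your overall strategy matches the paper's: invoke Theorem~\ref{main6.1} to reduce $R(k)$ to counting $v\in\mathbb N$ with $\ord(A_v)=k$, compute $\ord(A_v)=v^v$, and use strict monotonicity of $v\mapsto v^v$. The upper bound $\ord(A_v)\mid v^v$ is also correctly obtained from $A_v^{v^v}=(v/(v+1))^{(v+1)^{v+1}}\in\mathbb Q$ and Lemma~\ref{lma1.5}.

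However, the lower bound — that $A_v^k\in\mathbb Q$ forces $v^v\mid k$ — is where your argument has a real gap, and you half-flag it yourself. You claim that for a prime $p^a\parallel v$, rationality of $A_v^k$ forces ``$v^v\mid k(v+1)^{v+1}$ in the relevant exponent sense, hence $v^v\mid k$.'' What the $p$-adic exponent of $A_v^k$ actually being an integer gives is $v^v\mid a\,k\,(v+1)^{v+1}$, i.e.\ (using $\gcd(v^v,(v+1)^{v+1})=1$) only $v^v\mid a k$. That does \emph{not} yield $v^v\mid k$, because $a$ need not be coprime to $v^v$ (take $v=4$, $p=2$, $a=2$). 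Combining the conditions over all primes of $v$ and all primes of $v+1$, what you actually get is $v^v\mid g\,k$ where $g=\gcd$ of all exponents in the factorizations of $v$ and $v+1$, equivalently: setting $t=v^v/\gcd(k,v^v)$, rationality of $A_v^k$ forces both $v$ and $v+1$ to be perfect $t$-th powers. The missing idea is precisely the step showing this is impossible for $t\ge2$: if $v=a^t$, $v+1=b^t$ with $t\ge2$, then
\[
1=b^t-a^t=(b-a)\bigl(b^{t-1}+b^{t-2}a+\cdots+a^{t-1}\bigr)\ge 1\cdot t> 1,
\]
a contradiction. This Catalan-type observation is exactly what the paper uses (after pulling the problem into the form $v=a^t$, $v+1=b^t$ via Lemma~\ref{lma1}), and it is what legitimately closes the argument. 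Without it, your prime-by-prime bookkeeping does not establish $\ord(A_v)=v^v$. Once that step is added, your proposal and the paper's proof coincide.
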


In order to prove the results, we firstly show the following lemma:

\begin{lma}\label{OddEven}
Let $A\in \mathbb A\cap (0,e^{-e})$. If $h_o(A)$ or $h_e(A)$ is irrational, then $h_o(A)$ or $h_e(A)$ is transcendental.
\end{lma}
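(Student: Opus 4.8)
The statement to prove is Lemma~\ref{OddEven}: for $A\in\mathbb{A}\cap(0,e^{-e})$, if either $h_o(A)$ or $h_e(A)$ is irrational, then at least one of the two is transcendental. The natural approach mirrors the proof of Lemma~\ref{lma3}, using the functional equations~(\ref{OddEvenEqs}) together with the Gelfond--Schneider Theorem (Theorem~\ref{lma2}). The plan is to argue by contraposition: assume both $h_o(A)$ and $h_e(A)$ are algebraic, and deduce that both are in fact rational.

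First I would observe that by~(\ref{OddEvenEqs}) we have $h_o(A)=A^{h_e(A)}$ and $h_e(A)=A^{h_o(A)}$, and also the bounds $0<h_o(A)<1/e<h_e(A)<1$, so in particular $h_o(A)$ and $h_e(A)$ are nonzero and not equal to $1$. Next, note that $A\ne 0,1$ since $A\in(0,e^{-e})$; thus $A$ is an admissible base for Gelfond--Schneider. Now suppose for contradiction that $h_e(A)$ is irrational (and algebraic); then $A^{h_e(A)}$ is transcendental by Theorem~\ref{lma2}, contradicting $A^{h_e(A)}=h_o(A)\in\mathbb{A}$. Hence $h_e(A)\in\mathbb{Q}$. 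Symmetrically, if $h_o(A)$ is irrational and algebraic, then $A^{h_o(A)}=h_e(A)$ would be transcendental, again a contradiction; so $h_o(A)\in\mathbb{Q}$. Therefore, if both $h_o(A)$ and $h_e(A)$ are algebraic, both are rational, which is exactly the contrapositive of the claim.

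The only subtlety — and the one place to be slightly careful — is the logical bookkeeping: the hypothesis is a disjunction (``$h_o(A)$ \emph{or} $h_e(A)$ is irrational''), and the conclusion is also a disjunction, so contraposition turns ``(A is irrational) or (B is irrational)'' into ``(A is rational) and (B is rational)'' on the hypothesis side and likewise on the conclusion side; the argument above establishes exactly this, since assuming both are algebraic we separately conclude each is rational. I do not anticipate a genuine obstacle here: unlike the convergent case, where one uses the single relation $h(A)=A^{h(A)}$, here one simply applies Gelfond--Schneider twice, once to each of the two functional equations, and the interlocking structure of~(\ref{OddEvenEqs}) does the rest. One should just make sure to invoke the strict inequalities in~(\ref{OddEvenEqs}) to guarantee the exponents are genuine algebraic numbers (not, say, accidentally zero) before applying Theorem~\ref{lma2}.
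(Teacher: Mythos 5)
Your proof is correct and uses the same essential ingredients as the paper: the functional equations~(\ref{OddEvenEqs}) plus the Gelfond--Schneider Theorem. The paper argues directly (``if $h_o(A)$ is transcendental, done; otherwise $h_o(A)$ is algebraic and from $h_o(A)^{1/h_e(A)}=A$ conclude $h_e(A)$ is transcendental''), whereas you argue by contraposition: assume both values are algebraic and conclude both are rational. Your formulation is actually tighter. The paper's direct version, as written, only invokes the single relation $h_o(A)^{1/h_e(A)}=A$; that application of Gelfond--Schneider rules out $h_e(A)$ being algebraic and \emph{irrational}, but does not by itself exclude the (conceivably possible) configuration where $h_o(A)$ is algebraic irrational and $h_e(A)$ is rational. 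One needs the second relation $A^{h_o(A)}=h_e(A)$ to dispose of that case, which the paper leaves implicit. Your proof applies Gelfond--Schneider symmetrically to both equations and therefore closes this gap cleanly; the contrapositive framing also matches exactly how the lemma is actually used in the proof of Theorem~\ref{main6.1}. One tiny remark: you apply Gelfond--Schneider with base $A$ and exponent $h_e(A)$ (resp. $h_o(A)$), while the paper uses base $h_o(A)$ and exponent $1/h_e(A)$; these are equivalent, and your choice makes the needed nondegeneracy checks ($A\ne 0,1$ from $A\in(0,e^{-e})$; the exponent nonzero and $\neq 1$ from the bounds in~(\ref{OddEvenEqs})) especially transparent.
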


\begin{proof}
 If $h_o(e)$ is a transcendental number, then we immediately get this lemma. Thus we may assume that $h_o(A)$ is algebraic. It follows from (\ref{OddEvenEqs}) that $h_o(A)^{1/h_e(A)}=A$. Therefore $h_e(A)$ is transcendental from Theorem~\ref{lma2}. 
\end{proof}

 By the result of Hurwitz \cite{Hurwitz}, we obtain that
\begin{lma}\label{x^y=y^x}
The all solutions of the Diophantine equation
\begin{equation}\label{Ber1}
x^y=y^x,\quad x,y\in\mathbb{Q},\quad x>y>0
\end{equation}
are 
\begin{equation}\label{sol1}
x=(1+1/v)^{1+v},\quad y=(1+1/v)^{v} 
\end{equation}
for all $v\in \mathbb{N}$
\end{lma}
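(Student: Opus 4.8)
The statement to prove is Lemma~\ref{x^y=y^x}, characterizing the rational solutions of $x^y = y^x$ with $x > y > 0$.

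\medskip

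The plan is to reduce the problem to a single rational parameter and then invoke elementary divisibility. First I would observe that if $x = y$ the equation is trivially satisfied but violates $x > y$, so we may assume $x > y$. Writing $x/y = 1 + 1/v$ is not yet justified, so instead I would start by setting $r = x/y \in \mathbb{Q}$ with $r > 1$. Taking logarithms of $x^y = y^x$ gives $y \log x = x \log y$, i.e. $\frac{\log x}{x} = \frac{\log y}{y}$. Since the function $t \mapsto (\log t)/t$ is strictly increasing on $(0,e)$ and strictly decreasing on $(e,\infty)$, and since $x \neq y$, the two values $x$ and $y$ must lie on opposite sides of $e$: precisely $0 < y < e < x$ (note $y > 1$ is forced since $(\log t)/t \le 0$ for $t \le 1$ while it is positive for $t$ near $e$, so both must be $>1$). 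From $x^y = y^x$ with $x = ry$ we get $(ry)^y = y^{ry}$, hence $r^y y^y = y^{ry}$, so $r^y = y^{(r-1)y}$, giving $r = y^{r-1}$, and therefore $y = r^{1/(r-1)}$ and $x = r^{r/(r-1)}$.

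\medskip

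The heart of the argument is then to determine for which rational $r > 1$ the numbers $y = r^{1/(r-1)}$ and $x = r^{r/(r-1)}$ are both rational. Write $r = p/q$ in lowest terms with $p > q \ge 1$. Then $r - 1 = (p-q)/q$, so $y = (p/q)^{q/(p-q)}$. For $y$ to be rational, since $\gcd(p,q) = 1$, an application of Lemma~\ref{lma1} (in the form: if a rational number in lowest terms is an $(p-q)$-th root of a rational, then numerator and denominator are each perfect $(p-q)$-th powers) shows that both $p$ and $q$ must be $(p-q)$-th powers of positive integers, say $p = a^{p-q}$ and $q = b^{p-q}$ with $a > b \ge 1$. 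But then $p - q = a^{p-q} - b^{p-q}$, and since $a > b \ge 1$ forces $a \ge b+1 \ge 2$, the quantity $a^{p-q} - b^{p-q}$ grows at least like $2^{p-q} - 1^{p-q}$... wait, one must be careful when $b \ge 1$: I would argue $a^{m} - b^{m} \ge (b+1)^m - b^m \ge 1 + m b^{m-1} \cdot$(stuff)$\ge m$ with equality forcing $m = p - q \in \{1\}$ together with $b = 1$. Hence $p - q = 1$, so with $v := q$ we get $q = v$, $p = v+1$, i.e. $r = (v+1)/v = 1 + 1/v$. Substituting back, $y = r^{1/(r-1)} = r^{v} = (1+1/v)^{v}$ and $x = r \cdot y = (1+1/v)^{1+v}$, which is exactly \eqref{sol1}. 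Conversely, for each $v \in \mathbb{N}$ these values are rational and a direct check shows $x^y = y^x$.

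\medskip

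I expect the main obstacle to be the clean verification of the inequality $a^{m} - b^{m} \ge m$ (for integers $a > b \ge 1$ and $m \ge 1$) with a precise determination of the equality case, since the naive estimate is cleanest when $b \ge 1$ rather than $b = 0$, and one needs to rule out, e.g., $a = 2, b = 1$ with large $m$. A safe route is: $a^m - b^m = (a-b)(a^{m-1} + a^{m-2}b + \cdots + b^{m-1}) \ge 1 \cdot (a^{m-1} + \cdots + 1)$ when $b \ge 1$, and $a^{m-1} + \cdots + a + 1 \ge m$ with equality iff $a = 1$; but $a \ge 2$, so $a^m - b^m > m$ unless $m = 1$. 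When $m = 1$ we indeed have $p - q = 1$ with no further constraint, which is the wanted conclusion. (One then notes the degenerate possibility $q = 1$, $b = 1$ is already covered since $q = b^{p-q} = 1$ is consistent.) A secondary, purely bookkeeping point is to make sure Lemma~\ref{lma1} applies even when the ``base'' $p$ or $q$ equals $1$; this is trivial since $1$ is a perfect power of every order. Everything else is routine substitution and the monotonicity remark about $(\log t)/t$, which I would state without belaboring.
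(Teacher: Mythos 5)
Your proof is correct and takes essentially the same approach as the proof that appears (commented out) in the paper's source: both reduce via $r=x/y>1$ to the parametrization $y=r^{1/(r-1)}$, $x=r^{r/(r-1)}$, then use Lemma~\ref{lma1} together with coprimality to force the numerator and denominator to be perfect $m$-th powers with $m=p-q$, and finally rule out $m\ge 2$ by factoring the difference of $m$-th powers. In the compiled text the paper simply attributes the lemma to Hurwitz~\cite{Hurwitz} without a proof, so your argument matches the authors' intended (suppressed) derivation.
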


The Diophantine equation (\ref{Ber1}) appeared in a letter from Daniel Bernoulli to Christian Goldbach dated 29 June 1728. Bernoulli found that the unique solution of a pair of integers of (\ref{Ber1}), which is $(x,y)=(2,4)$. Goldbach \cite{Goldbach} responded to Bernoulli and claimed that if a pair of positive real numbers $(x,y)$ satisfies $x^y=y^x$ and $x=sy$ for some real number $s>1$, then we have  $x=s^{1/(s-1)}$ and $y=s^{s/(s-1)}$. We refer \cite{Anderson} to the reader who wants to know more details of this story.

\begin{proof}[Proof of Theorem~\ref{main6.1}]
Assume that $h_o(A)$ and $h_e(A)$ are algebraic. From Lemma~\ref{OddEven}, $h_o(A)$ and $h_e(A)$ are rational. From (\ref{OddEvenEqs}), we have 
\[
    (1/h_o(A))^{1/h_e(A)}=(1/h_e(A))^{1/h_o(A)}
\]
It follows from Lemma~\ref{x^y=y^x} that
\[
    h_o(A)=(1+1/v)^{-1-v},\quad h_e(A)=(1+1/v)^{-v}
\]
for some $v\in \mathbb{N}$. Thus the formula (\ref{FormulaOfA}) is obtained from $A=h_o(A)^{1/h_e(A)}$. 

Conversely, assume that there exists a positive integer $v$ satisfying (\ref{FormulaOfA}). Then let $x=(1+1/v)^{-1-v}$ and $y=(1+1/v)^{-v}$. We show that
\begin{gather}\label{x^(1/y)=A}
      x^{1/y}=A,\\ \label{y^(1/x)=A}
      y^{1/x}=A,\\ \label{0<x<1/e<y<1}
     0<x<\frac{1}{e}<y<1. 
\end{gather}
The formulas (\ref{x^(1/y)=A}) and (\ref{y^(1/x)=A}) are clear. It follows that $1<(1+1/v)^v<e<(1+1/v)^{1+v}$ from the Taylor expansion. Therefore we get (\ref{0<x<1/e<y<1}). From (\ref{OddEvenEqs}), we have
$h_o(A)=x=(1+1/v)^{-1-v}$ and $h_e(A)=y=(1+1/v)^{-v}$ which are algebraic.
\end{proof}

\begin{proof}[Proof of Theorem~\ref{main6.2}]
We find the solutions of the Diophantine equation:
\begin{equation}\label{Diophantine2}
\left(\frac{v}{v+1}\right)^{(v+1)\left(\frac{v+1}{v}\right)^{v}}=\left(\frac{x}{y}\right)^{1/k},\quad v,x,y\in \mathbb{N},\quad \gcd(x,y)=1.
\end{equation}
Let $A$ be the left hand side of (\ref{Diophantine2}). It follows that $A^{v^v}\in \mathbb{Q}$. From Lemma~\ref{lma1.5}, we have $k\:|\: v^v$. Let $t=v^v/k\in \mathbb{N}$. It is seen that
\[
    v^{(v+1)^{v+1}}=x^t,\quad (v+1)^{(v+1)^{v+1}}=y^t.
\]
There exists positive integers $a$ and $b$ such that 
\[
    v=a^t,\quad v+1=b^t
\]
from Lemma~\ref{lma1} and $\gcd(v,v+1)=1$. Assume that $t\geq 2$. Then it follows from $b>a$ that
\[
    1=(b-a)(b^{t-1}+b^{t-2}a+\cdots+a^{t-1})\geq t,
\]
which is a contradiction. Therefore $t=1$, which means that
\[
    k=v^v.
\]
\end{proof}

\section{Generalized case}\label{section7}
In this section, we consider $x$ is a complex number. First, we show Theorem \ref{hoyoyo}. There are many results about convergence of iterated exponential $(\ref{IE})$.
Carlsson showed that convergence of (\ref{IE}) can occur only if $x\in R=\{e^{te^{-t}} ~|~ |t|\le1\}$ in 1907 \cite{c07}. 
In 1983, Baker and Rippon showed the following theorem.
\begin{thm}[Baker and Rippon \cite{br}]
\label{brr}
Let \[\mathcal{R}=\{e^{te^{-t}} ~|~ |t| < 1\text{, or  $t$ is a root of unity}\}.\]
If $x\in \mathcal{R}$,
the sequence (\ref{IE}) converges to $e^t$. 
For almost all $t$ with $|t|=1$ the sequence (\ref{IE}) diverges.
\end{thm}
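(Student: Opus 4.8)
\emph{Proof strategy.}
The plan is to recast the limit of $(\ref{IE})$ as the iteration of a single holomorphic self-map of $\mathbb{C}$ and then invoke local fixed-point theory. Writing $x=e^{te^{-t}}$, set $c=te^{-t}$ and $\phi(w)=x^{w}=e^{cw}$; then the $n$-th term of $(\ref{IE})$ is $\phi^{\circ n}(1)$, the $n$-fold composite of $\phi$ evaluated at $1$. A short computation gives that $w^{*}=e^{t}$ is a fixed point of $\phi$ with multiplier $\phi'(w^{*})=c\,w^{*}=t$. Since $\phi$ is a transcendental entire function whose only singular value is the omitted value $0$, Fatou's theorem on singular values allows $\phi$ at most one attracting or parabolic cycle; in particular $\phi$ has no two distinct attracting fixed points, and (by continuity) any finite limit of an orbit of $\phi$ is a fixed point of $\phi$. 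Consequently ``$(\ref{IE})$ converges'' always means ``$\phi^{\circ n}(1)\to e^{t}$'', and the theorem splits into the three regimes $|t|<1$, $t$ a root of unity, and $|t|=1$ with $t$ not a root of unity.

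First I would treat $|t|<1$, where $w^{*}$ is attracting. By continuity of $\phi'$ there is a closed disk $\overline{D}$ about $w^{*}$ on which $|\phi'|\le\mu<1$, so $\phi(\overline{D})\subset\overline{D}$ and $\phi$ is a $\mu$-contraction on $\overline{D}$; hence any orbit that enters $\overline{D}$ converges geometrically to $w^{*}$, and the real point is to show that $\phi^{\circ n}(1)$ does enter $\overline{D}$. When $|c|=|te^{-t}|<1/e$ this is painless: if $M\in(0,1)$ solves $M=|c|e^{M}$, then the disk $\{|w|\le e^{M}\}$ contains $1$, is mapped into itself by $\phi$ (because $|w|\le e^{M}$ gives $|\phi(w)|\le e^{|c|e^{M}}=e^{M}$) and is contracted by $\phi$ (because $|\phi'|\le|c|e^{M}=M<1$), so the orbit of $1$ converges to the unique fixed point of $\phi$ in this disk, which by the uniqueness of the attracting cycle must be $w^{*}$. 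For general $|t|<1$ — in particular when $\Re t<0$, where $|c|$ can be large and no invariant disk exists — I would instead construct a bounded forward-invariant region $\Omega\ni 1$, adapted to the direction of $c$ and on which $\Re(cw)$ stays bounded above, and then control the orbit inside $\Omega$ via the identity $\phi(w)-w^{*}=w^{*}\bigl(e^{c(w-w^{*})}-1\bigr)$, concluding convergence to $w^{*}$ from normality of $\{\phi^{\circ n}\}$ on the immediate basin (equivalently, from eventual monotonicity of $|\phi^{\circ n}(1)-w^{*}|$). This region is laborious to set up, but the underlying scheme — invariant region plus contraction — is clear.

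Next I would handle $t$ a root of unity, where $|t|=1$ and $w^{*}$ is a parabolic fixed point: if $t$ is a primitive $q$-th root of unity then $\phi^{\circ q}$ has multiplier $t^{q}=1$ at $w^{*}$ with a nonzero parabolic term, so by the Leau--Fatou flower theorem there are attracting petals at $w^{*}$ along which iteration converges to $w^{*}$, and the task is to show that the orbit of $1$ eventually lands in the immediate parabolic basin; I would do this by a petal-shaped region estimate, most conveniently in the coordinate $z=\log w$ (so that $z_{n+1}=c\,e^{z_{n}}$). The cases $t=1$ (so $x=e^{1/e}$) and $t=-1$ (so $x=e^{-e}$) are the classical real endpoints already contained in Barrow's analysis and serve as a consistency check. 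This parabolic case is the technically most involved part of the convergence direction.

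Finally, for the divergence assertion, let $|t|=1$ with $t$ not a root of unity, so $w^{*}$ is an irrationally indifferent fixed point with multiplier $t=e^{2\pi i\alpha}$, $\alpha$ irrational, and $\phi$ acts near $w^{*}$ like an irrational rotation. Here I would argue directly that for Lebesgue-almost every $\alpha$ the orbit $\phi^{\circ n}(1)$ is unbounded, extracting from the metric theory of the rotation number (small-divisor estimates combined with a Borel--Cantelli argument) the fact that the orbit is infinitely often pushed far from $w^{*}$; the only $t$ with $|t|=1$ for which $(\ref{IE})$ can still converge are then the countably many for which $\phi^{\circ n}(1)=w^{*}$ holds exactly for some $n$, i.e.\ for which $c\,\phi^{\circ(n-1)}(1)\in t+2\pi i\,\mathbb{Z}$, and these form a measure-zero set. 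I expect this metric step — quantifying how an irrational rotation number repeatedly drives the orbit away from the indifferent fixed point — to be the main obstacle, since here, unlike in the two convergence regimes, there is no ready-made local-dynamics mechanism to lean on. Combining the three cases yields the theorem.
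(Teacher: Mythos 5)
A preliminary remark on the comparison: the paper does not prove Theorem~\ref{brr} at all — it is imported from Baker and Rippon \cite{br}, with Galidakis's Lambert-$W$ proof \cite{ga05} mentioned as an alternative — so your sketch can only be measured against those arguments. Your framework is the right one (iterate $\phi(w)=e^{cw}$ with $c=te^{-t}$; $w^{*}=e^{t}$ is a fixed point of multiplier $t$; split into attracting, parabolic and irrationally indifferent regimes), but the sketch has genuine gaps in each regime. In the two convergence regimes the missing step is precisely the ``capture'' of the orbit of $1$: for $|t|<1$ with $\Re t<0$ you offer only an unspecified invariant region, and your fallback (``normality of $\{\phi^{\circ n}\}$ on the immediate basin'') is circular, since it presupposes that $1$ lies in the immediate basin, which is exactly what must be proved; the same applies to the parabolic case (``petal-shaped region estimate''). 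The mechanism that closes both cases is the singular-value form of Fatou's theorem, which you quote but never use for this purpose: every immediate attracting or parabolic basin contains a singular value, the only singular value of $\phi(w)=e^{cw}$ is the asymptotic value $0$, and $\phi(0)=1$, so the tower sequence is the tail of the singular orbit and must converge to $e^{t}$ in both regimes. Relatedly, your claim that ``convergence always means $\phi^{\circ n}(1)\to e^{t}$'' does not follow from uniqueness of the attracting cycle: a finite limit is some fixed point $e^{s}$ with $se^{-s}=te^{-t}$, and convergence to a fixed point other than $e^{t}$ must still be excluded; for $|t|\le 1$ this uses that $s\mapsto se^{-s}$ is injective on the closed unit disk, so every other fixed point is repelling and can only be a limit if the orbit lands on it exactly.

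The divergence part as proposed would not work. Divergence means non-convergence, not unboundedness, and no mechanism is indicated by which ``small-divisor estimates plus Borel--Cantelli'' would force the orbit of $1$ to be unbounded for almost every rotation number; for linearizable parameters the singular orbit accumulates on the boundary of the Siegel disk, and nothing in the sketch controls its modulus, so the plan is both stronger than needed and unsupported. The argument that works is: for Lebesgue-almost every $\alpha$ the multiplier $t=e^{2\pi i\alpha}$ satisfies a Diophantine (Brjuno) condition, so by Siegel's theorem $e^{t}$ is the centre of a Siegel disk; inside the disk the dynamics is conjugate to an irrational rotation, so an orbit can converge to $e^{t}$ only by hitting it exactly; and the set of $t$ on the unit circle for which some $\phi^{\circ n}(1)$ equals $e^{t}$, or equals one of the other (repelling) fixed points, is a countable union of zero sets of analytic functions of $t$ that do not vanish identically, hence has measure zero. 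Without the linearization step your closing claim — that the only parameters on the circle for which the tower can converge are the exact-landing ones — is unjustified (a priori the orbit could converge to an indifferent fixed point without landing on it), so the ``almost all'' conclusion is not reached by the proposed route.
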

An alternative proof of Theorem \ref{brr} using Lambert's $W$ function is given by Galidakis \cite{ga05}.
In the following, we consider whether the value $h(x)$ is a transcendental number or not. The Lindemann theorem states that if $t\in\mathbb A\setminus\{0\}$ then $h(x)=e^t$ is transcendental. Therefore, $h(x)$ can be an algebraic number only if $t$ is a transcendental number. Moreover we can show a similar lemma to Lemma \ref{lma3} by the same argument of the proof of Lemma \ref{lma3}.
\begin{lma}
Let $x\in \mathbb{A}\cap \mathcal R$. If $h(x)\not\in \mathbb{Q}$, then $h(x)\in \mathbb{T}$.
\end{lma}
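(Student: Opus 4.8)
The plan is to repeat, in the complex setting, the proof of Lemma~\ref{lma3}; the only new ingredient needed is the complex analogue of the functional equation~(\ref{h1}).

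First I would record that functional equation. Writing $x=e^{te^{-t}}$ with either $|t|<1$ or $t$ a root of unity, Theorem~\ref{brr} tells us the tower~(\ref{IE}) converges to $h(x)=e^{t}$; since the maps $z\mapsto x^{z}$ forming the successive partial towers are continuous in $z$, letting the height tend to infinity in the relation between a tower of height $n+1$ and $x$ raised to a tower of height $n$ yields
\[
h(x)=x^{h(x)}
\]
(equivalently, $x^{e^{t}}=e^{te^{-t}\cdot e^{t}}=e^{t}$). Hence $x$ is one of the values of $h(x)^{1/h(x)}$.

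Next I would argue by contradiction, exactly as in Lemma~\ref{lma3}. Assume $h(x)\in\mathbb{A}$. By hypothesis $h(x)\notin\mathbb{Q}$, and $h(x)=e^{t}\neq 0$; also $h(x)\neq 1$, since $e^{t}=1$ would force $t\in 2\pi i\mathbb{Z}$, incompatible with $|t|\le 1$ unless $t=0$, and $t=0$ gives $h(x)=1\in\mathbb{Q}$, which is excluded. Thus $1/h(x)\in\mathbb{A}\setminus\mathbb{Q}$ and $h(x)\in\mathbb{A}\setminus\{0,1\}$, so Theorem~\ref{lma2} shows every value of $h(x)^{1/h(x)}$ is transcendental. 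By the previous step $x$ is such a value, contradicting $x\in\mathbb{A}$; hence $h(x)\in\mathbb{T}$.

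The main obstacle I anticipate is making the functional equation $h(x)=x^{h(x)}$ fully rigorous for complex $x$ with a coherent choice of branch of $\log$, so that the value of $h(x)^{1/h(x)}$ in play is literally $x$. I expect this to be mild, because Theorem~\ref{lma2} applies to \emph{every} determination of $\alpha^{\beta}$ when $\alpha\in\mathbb{A}\setminus\{0,1\}$ and $\beta\in\mathbb{A}\setminus\mathbb{Q}$: whatever branch the tower implicitly uses, the corresponding value of $h(x)^{1/h(x)}$ is transcendental, so the contradiction is unaffected. The only genuinely elementary checking left is the exclusion of $h(x)\in\{0,1\}$ carried out above.
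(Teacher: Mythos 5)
Your argument is correct and is essentially the reconstruction the paper intends: the paper simply says the lemma follows ``by the same argument of the proof of Lemma~\ref{lma3},'' i.e.\ apply Theorem~\ref{lma2} to $h(x)^{1/h(x)}=x$ after assuming $h(x)\in\mathbb{A}\setminus\mathbb{Q}$. You have usefully supplied the two points the paper leaves implicit in the complex setting --- that the tower and the expression $h(x)^{1/h(x)}=e^{te^{-t}}$ both use the branch $\log h(x)=t$ with $|t|\le 1$, and that $h(x)\notin\{0,1\}$ (so the unstated $\alpha\neq 0,1$ hypothesis of Theorem~\ref{lma2} is met); these are mild but genuinely worth checking, and your handling of them is right.
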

Since $|t|\le1$, if we assume $h(x)=e^t\in\mathbb Q$, then $t\in \mathbb R$ and $x$ is positive. Thus, there are no algebraic non-positive numbers $x$ such that $h(x)$ is an algebraic number. This shows Theorem \ref{hoyoyo}.
Also, this shows that our results can be extended to all algebraic numbers $A$ such that $h(A)$ converges.

Next we consider the case $x=\alpha^\beta$, where both of $\alpha\not=1$ and $\beta$ are algebraic numbers. Since if $\beta$ is rational then $x$ becomes algebraic, this is one of the generalizations of our results. From Theorem \ref{brr}, 
if $\beta=\frac{te^{-t}}{\log \alpha}$, where $t$ satisfies the condition of Theorem \ref{brr}, then the sequence (\ref{IE}) converges to $e^{t}$. In the following, we specify the form of $t$.
\begin{lma}
Let $\alpha\not=1,\beta$ be algebraic numbers.
For $\alpha^\beta\in \mathcal R$ it holds that $\beta h(\alpha^\beta)\not\in \mathbb{Q}$ if and only if $h(\alpha^\beta)\in \mathbb{T}$.
\end{lma}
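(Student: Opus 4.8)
The plan is to mimic the proof of Lemma~\ref{lma3} (and of Lemma~\ref{OddEven}), using the functional equation for $h$ together with the Gelfond--Schneider theorem, but now taking care that the relevant exponent is $\beta h(\alpha^\beta)$ rather than $h(\alpha^\beta)$ itself. Write $x=\alpha^\beta$ and $w=h(x)=e^t$, so that by Theorem~\ref{brr} we have $w=x^w$, i.e.\ $w=\alpha^{\beta w}$. One direction is immediate: if $h(\alpha^\beta)\in\mathbb T$ then certainly $\beta h(\alpha^\beta)\notin\mathbb Q$, since $\beta$ is a nonzero algebraic number and a rational multiple of a nonzero algebraic number is algebraic.

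For the substantive direction, suppose $\beta w\notin\mathbb Q$; I want to conclude $w\in\mathbb T$. Assume for contradiction that $w\in\mathbb A$. Note $w\neq 0$ (indeed $w=e^t$ with $|t|\le 1$). From $w=\alpha^{\beta w}$ we get, after raising to the power $1/w$, the relation $\alpha^\beta = w^{1/w}$, but more usefully I keep the relation in the form $w = \alpha^{\beta w}$: here the base $\alpha$ is algebraic, $\alpha\neq 0,1$, and the exponent $\beta w$ is algebraic (product of two algebraic numbers) and, by hypothesis, irrational. Theorem~\ref{lma2} then forces $\alpha^{\beta w}$ to be transcendental, contradicting $\alpha^{\beta w}=w\in\mathbb A$. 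Hence $w\in\mathbb T$, as desired.

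The only subtlety — and the place that requires a line of care rather than a routine citation — is the well-definedness of the complex powers: $\alpha^{\beta w}$ is $\exp(\beta w\log\alpha)$ for a chosen branch of $\log\alpha$, and to apply Gelfond--Schneider one must use the value of $\log\alpha$ for which the identity $w=\alpha^{\beta w}$ actually holds, namely $\log\alpha$ chosen so that $\beta w\log\alpha = tw\cdot(e^{-t}/\beta)\cdot\beta$ — more simply, so that $\beta\log\alpha = te^{-t}$ and hence $\beta w\log\alpha = we^{-t}\cdot t = t$ since $w=e^t$. With that branch fixed, $\beta w\log\alpha = t$, and Theorem~\ref{lma2} applies verbatim to the triple $(\alpha,\beta w)$ provided $\beta w\notin\mathbb Q$ and $\alpha\neq 0,1$; the hypothesis $\alpha\neq 1$ is given, and $\alpha\neq 0$ since $\alpha^\beta=x\in\mathcal R$ is nonzero. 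The main obstacle is thus purely bookkeeping: confirming that the branch of the logarithm implicit in Theorem~\ref{brr}'s conclusion $h(\alpha^\beta)=e^t$ is the same one that makes $w=\alpha^{\beta w}$ an honest identity of the form to which Gelfond--Schneider applies, so that no spurious ambiguity of $\log$ sneaks in. Once that is pinned down, the argument is a two-line repetition of Lemma~\ref{lma3}.
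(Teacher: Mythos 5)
Your proof is correct and follows essentially the same approach as the paper: both rest on the functional equation $h(\alpha^\beta)=\alpha^{\beta h(\alpha^\beta)}$ combined with the Gelfond--Schneider theorem. You cast the substantive direction as a proof by contradiction (and add a useful aside on branches of $\log\alpha$), whereas the paper argues directly by splitting on whether $\beta h(\alpha^\beta)$ is transcendental or algebraic irrational, but the underlying idea is identical.
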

\begin{proof}
Since we assume $\beta$ is algebraic, if $\beta h(\alpha^\beta)$ is a transcendental number then $h(\alpha^\beta)\in \mathbb T$. In the following, we assume $\beta h(\alpha^\beta)\in \mathbb A\setminus \mathbb{Q}$.
From Theorem~\ref{lma2}, $h(\alpha^\beta)=\alpha^{\beta h(\alpha^\beta)}$ is transcendental. 
This proves this lemma.
\end{proof}
This lemma shows that $h(\alpha^\beta)$ is algebraic if and only if $\beta h(\alpha^\beta)\in \mathbb Q$, that is, \[\beta h(\alpha^\beta)=\frac{t}{\log \alpha}\in\mathbb Q.\] 
Therefore, there exists an $a\in\mathbb Q$ such that $t=\log\alpha^a$. One can check easily that $\log\alpha^a$ is transcendental by the Lindemann theorem, so $\log \alpha^a$ is not a root of unity. Thus it holds that $|t|<1$, that is, \[-|\log\alpha|^{-1}< a< |\log\alpha|^{-1}.\]
We record it as a lemma.

\begin{lma}
\label{hoe}
Let $\alpha\not=1,\beta$ be algebraic numbers with $\alpha^\beta\in \mathcal R$.
Then the followings hold.
\begin{enumerate}
    \item If $h(\alpha^\beta)\in \mathbb{A}$ then $\beta=\frac{a}{\alpha^a}$, where $a\in\mathbb Q\cap(-|\log\alpha|^{-1},|\log\alpha|^{-1})$.
    \item If there exists $a\in\mathbb Q\cap(-|\log\alpha|^{-1},|\log\alpha|^{-1})$ such that $\beta=\frac{a}{\alpha^a}$, then $h(\alpha^\beta)=\alpha^a$.
\end{enumerate}
\end{lma}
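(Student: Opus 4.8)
The plan is to push everything through the parametrisation of $\mathcal{R}$ supplied by Theorem~\ref{brr} and then invoke the preceding lemma. First I would fix a determination of $\log$, so that $\alpha^{\beta}=e^{\beta\log\alpha}$ is a definite value, noting $\log\alpha\neq0$ since $\alpha\neq1$. Because $\alpha^{\beta}\in\mathcal{R}$, Theorem~\ref{brr} furnishes a $t$ with $e^{te^{-t}}=\alpha^{\beta}$, satisfying $|t|<1$ or being a root of unity, and for which $(\ref{IE})$ converges to $h(\alpha^{\beta})=e^{t}$. Choosing branches compatibly so that $\beta\log\alpha=te^{-t}$, one obtains the identity
\[
\beta\,h(\alpha^{\beta})=\beta e^{t}=\frac{te^{-t}}{\log\alpha}\cdot e^{t}=\frac{t}{\log\alpha},
\]
which is the hinge of the whole argument.

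For part (1), suppose $h(\alpha^{\beta})\in\mathbb{A}$. The preceding lemma says $\beta h(\alpha^{\beta})\notin\mathbb{Q}\iff h(\alpha^{\beta})\in\mathbb{T}$, so in contrapositive form $h(\alpha^{\beta})\in\mathbb{A}$ is equivalent to $\beta h(\alpha^{\beta})\in\mathbb{Q}$. Combined with the identity above this forces $t/\log\alpha=a$ for some $a\in\mathbb{Q}$, i.e.\ $t=a\log\alpha$ and $e^{t}=\alpha^{a}$. Plugging back, $h(\alpha^{\beta})=e^{t}=\alpha^{a}$ and $\beta=te^{-t}/\log\alpha=a e^{-t}=a\alpha^{-a}=a/\alpha^{a}$, which is the claimed shape. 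It then remains to bound $a$: if $a=0$ then $t=0$ and $|t|<1$ trivially; if $a\neq0$ then $t=a\log\alpha\neq0$, while $e^{t}=\alpha^{a}$ is algebraic, so by the Lindemann theorem $t$ must be transcendental, in particular not a root of unity, whence Theorem~\ref{brr} leaves only the possibility $|t|<1$. In either case $|a|\,|\log\alpha|=|t|<1$, that is, $a\in(-|\log\alpha|^{-1},|\log\alpha|^{-1})$.

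For part (2) I would argue in the reverse direction. Given $a\in\mathbb{Q}\cap(-|\log\alpha|^{-1},|\log\alpha|^{-1})$ with $\beta=a/\alpha^{a}$, set $t=a\log\alpha$. Then $\beta\log\alpha=a\log\alpha\cdot\alpha^{-a}=te^{-t}$, so $\alpha^{\beta}=e^{te^{-t}}$, and $|t|=|a|\,|\log\alpha|<1$; by Theorem~\ref{brr} the sequence $(\ref{IE})$ therefore converges to $h(\alpha^{\beta})=e^{t}=\alpha^{a}$, which is algebraic.

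I do not anticipate a genuine obstacle: once Theorem~\ref{brr}, the preceding lemma, and the Lindemann theorem are in hand, the statement is a short bookkeeping argument. The points requiring care are branch-consistency — working with a single determination of $\log$ so that $\alpha^{\beta}=e^{\beta\log\alpha}$, $e^{te^{-t}}$ and $\alpha^{a}=e^{a\log\alpha}$ all match up, and correspondingly that the $t$ supplied by Theorem~\ref{brr} may be taken to satisfy $\beta\log\alpha=te^{-t}$ exactly rather than only modulo $2\pi i$ — together with separating off the degenerate value $a=0$ before applying Lindemann, since $\log 1=0$ is not transcendental.
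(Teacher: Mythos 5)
Your proof is correct and follows essentially the same route the paper sketches in the paragraph preceding Lemma~\ref{hoe}: parametrise $\alpha^\beta=e^{te^{-t}}$ via Theorem~\ref{brr}, compute $\beta h(\alpha^\beta)=t/\log\alpha$, apply Lemma 7.2 to reduce to $t/\log\alpha\in\mathbb{Q}$, and use the Lindemann theorem to force $|t|<1$. Your explicit separation of the $a=0$ case and your remark about fixing a consistent branch of $\log$ are small but genuine refinements of the paper's slightly informal presentation.
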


Lemma \ref{hoe} implies the following theorem.
\begin{thm}
\label{abt}
Let $\alpha\not=1,\beta$ be algebraic numbers with $\alpha^\beta\in \mathcal R$.
If only one of $\ord(\alpha)$ and $\ord(\beta)$ is infinity then $h(\alpha^\beta)$ is a transcendental number. 
\end{thm}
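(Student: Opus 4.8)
The plan is to argue by contradiction, using Lemma~\ref{hoe} to replace the transcendence statement by an integer‑exponent relation that Lemma~\ref{lma1.5} can destroy. Suppose $h(\alpha^\beta)\in\mathbb{A}$. By Lemma~\ref{hoe}(1) there is a rational $a\in(-|\log\alpha|^{-1},|\log\alpha|^{-1})$ with $\beta=a/\alpha^{a}$, where $\alpha^{a}$ denotes the branch of the power that equals $h(\alpha^\beta)$. If $a=0$ then $\beta=0$ and $\alpha^\beta=1$ (the trivial situation $h(1)=1$), so we may assume $a\neq0$, hence also $\beta\neq0$. Write $a=p/q$ with $p\in\mathbb{Z}\setminus\{0\}$, $q\in\mathbb{N}$, $\gcd(p,q)=1$.

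The key step is to convert the rational‑exponent identity $\beta=a/\alpha^{a}$ into one with integer exponents. Put $w:=h(\alpha^\beta)=\alpha^{a}\in\mathbb{A}$; then $w^{q}=\alpha^{p}$, and from $\beta w=a=p/q$ one gets $\beta^{q}=p^{q}/(q^{q}w^{q})$, that is
\[
\alpha^{p}\beta^{q}=\frac{p^{q}}{q^{q}}\in\mathbb{Q}\setminus\{0\}.
\]
Since $\alpha\neq0$ and $\beta\neq0$, this relation, and every integer power of it, is a nonzero rational, which is all that is needed below.

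Now invoke the hypothesis that exactly one of $\ord(\alpha),\ord(\beta)$ is infinite. If $\ord(\alpha)=\infty$ and $\ord(\beta)=m<\infty$, then $\beta^{qm}=(\beta^{m})^{q}\in\mathbb{Q}$; raising the displayed identity to the $m$‑th power and dividing by the nonzero rational $\beta^{qm}$ yields $\alpha^{pm}\in\mathbb{Q}$, whence $\alpha^{|pm|}\in\mathbb{Q}$ (take reciprocals if $pm<0$, using $\alpha^{pm}\neq0$), and Lemma~\ref{lma1.5} forces $\ord(\alpha)\mid|pm|<\infty$, a contradiction. The case $\ord(\beta)=\infty$, $\ord(\alpha)=n<\infty$ is the mirror image: from $\alpha^{n}\in\mathbb{Q}$ one has $\alpha^{pn}\in\mathbb{Q}$, so the displayed identity (raised to the $n$‑th power and divided through) gives $\beta^{qn}\in\mathbb{Q}$, and Lemma~\ref{lma1.5} gives $\ord(\beta)\mid qn<\infty$, again a contradiction. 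In either case $h(\alpha^\beta)\notin\mathbb{A}$, i.e.\ it is transcendental, which is consistent with the earlier remark that $h(\alpha^\beta)=e^{t}$ can be algebraic only if $t$ is transcendental.

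The substantive point is the reduction in the second paragraph: extracting the clean relation $\alpha^{p}\beta^{q}\in\mathbb{Q}$ from the a priori irrational‑looking formula of Lemma~\ref{hoe}. Once that is in hand the two cases are routine divisibility arguments, the only bookkeeping being the sign of $p$ (handled by passing to $\alpha^{-1}\in\mathbb{A}$ and noting no power of a nonzero algebraic number vanishes) and the degenerate value $\beta=0$, where $\alpha^\beta=1$.
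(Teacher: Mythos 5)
Your proof is correct and takes essentially the same route as the paper's: invoke Lemma~\ref{hoe} to get $\beta=a/\alpha^{a}$ with $a\in\mathbb{Q}$, then use the order hypothesis to derive a contradiction. The only organizational difference is that you first extract the symmetric identity $\alpha^{p}\beta^{q}=p^{q}/q^{q}\in\mathbb{Q}$ and then treat both cases uniformly from it, whereas the paper handles each case by substituting directly (raising $\beta=a/\alpha^{a}$, respectively $\alpha=(a/\beta)^{1/a}$, to a suitable integer power). Your packaging is arguably slightly cleaner, and you are more careful than the paper about the degenerate value $a=0$ and the sign of the exponents; on the other hand your appeal to Lemma~\ref{lma1.5} is unnecessary overkill — from $\beta^{qm}\in\mathbb{Q}$ (resp.\ $\alpha^{pn}\in\mathbb{Q}$) the definition of $\ord$ as a minimum already gives $\ord(\beta)\le qm<\infty$ directly, which is how the paper concludes.
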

\begin{proof}
It suffices to show that when $h(\alpha^\beta)\in \mathbb{A}$, the order of $\alpha$ is infinity if and only if the order of $\beta$ is so. First, we assume $\ord(\alpha)=k<\infty$ and $\ord(\beta)=\infty$. If $h(\alpha^\beta)\in \mathbb{A}$ then Lemma \ref{hoe} implies that there exists a rational number $a=\frac{a_1}{a_2}$ such that $\beta=\frac{a}{\alpha^a}$. Since $\ord(\alpha)=k$, $\beta^{a_2k}=\frac{a^{a_2k}}{\alpha^{a_1k}}$ is a rational number. This contradicts to $\ord(\beta)=\infty$.

Next we assume $\ord(\alpha)=\infty$ and $\ord(\beta)=k<\infty$. As well as the above, if $h(\alpha^\beta)\in \mathbb{A}$ then $\beta=\frac{a}{\alpha^a}$, that is, $\alpha=\left(\frac{a}{\beta}\right)^{\frac{1}{a}}$ for some rational number $a=\frac{a_1}{a_2}$. Then it holds that $\alpha^{a_1k}=\left(\frac{a^k}{\beta^k}\right)^{a_2}$ is rational, but this contradicts to $\ord(\alpha)=\infty$. This proves the theorem.
\end{proof}



\appendix

\section{Transcendence of $h(1/\sqrt[n]{n})$}
We do not mention an example of $A\in\mathbb{A}$ such that $h(A)$ is transcendental. Thus this appendix gives such an example. 

\begin{prop} \label{main2}
For every $n\ge 2$, $h(1/\sqrt[n]{n})$ is transcendental.
\end{prop}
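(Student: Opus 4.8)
### Proof proposal

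The plan is to apply the machinery already developed, especially Lemma~\ref{lma3}, by showing that $h(1/\sqrt[n]{n})$ is irrational and then invoking transcendence. First I would check that $A:=1/\sqrt[n]{n}=n^{-1/n}$ lies in the interval $[e^{-e},e^{1/e}]$ so that $h(A)$ is defined; since $n\ge 2$ we have $0<n^{-1/n}<1$, and the lower bound $n^{-1/n}\ge e^{-e}$ follows from the fact that $n^{-1/n}$ is decreasing in $n$ for $n\ge 3$ with its infimum bounded below by $e^{-1/e}>e^{-e}$ (and the case $n=2$ is immediate), so $A\in\mathbb{A}\cap[e^{-e},e^{1/e}]$. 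By Lemma~\ref{lma3}, it therefore suffices to prove that $h(A)\notin\mathbb{Q}$; then $h(A)\in\mathbb{T}$ automatically, and in fact $h(A)\notin\mathbb{A}$ entirely.

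Next I would argue by contradiction: suppose $h(A)=a/b$ with $a,b\in\mathbb{N}$, $\gcd(a,b)=1$. From the functional equation~(\ref{h1}) we have $A=h(A)^{1/h(A)}=(a/b)^{b/a}$, so $n^{-1/n}=(a/b)^{b/a}$, equivalently $(b/a)^{b/a}=n^{-1/n}$, i.e. $(b/a)^{bn}=a^{?}$ — more carefully, raising $(a/b)^{b/a}=n^{-1/n}$ to the power $an$ gives
\begin{equation*}
a^{bn}\cdot n^{a}=b^{bn},
\end{equation*}
so that $a^{bn}\mid b^{bn}$; since $\gcd(a,b)=1$ this forces $a=1$. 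Then $n^{1}=b^{bn}$, i.e. $n=b^{bn}$. But $b\ge 2$ would give $n=b^{bn}\ge 2^{2n}>n$, a contradiction, so $b=1$; then $n=1$, contradicting $n\ge 2$. Hence $h(A)$ is not rational, and Lemma~\ref{lma3} completes the argument.

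The only real subtlety is the membership $A\in[e^{-e},e^{1/e}]$, which is a routine calculus fact (the function $n\mapsto n^{-1/n}$ attains its minimum over integers $n\ge 1$ and that minimum comfortably exceeds $e^{-e}\approx 0.0659$), and the bookkeeping in clearing denominators to land at the Diophantine relation $a^{bn}n^a=b^{bn}$. Once that relation is in hand, the coprimality of $a$ and $b$ together with a trivial size comparison kills all possibilities except the forbidden $n=1$, so the main obstacle — if any — is purely organizational rather than conceptual. An alternative, shorter route avoiding even the explicit Diophantine step is to note that $h(A)\in[1/e,e]$ by~(\ref{h1}), so if $h(A)=a/b$ were rational with $h(A)\ne 1$ we could still extract $A=(a/b)^{b/a}$ and observe that $A^n=n^{-1}$ is rational with $\ord(A)\mid n$; feeding this into the same prime-factorization argument as in Lemma~\ref{lma1} forces $h(A)\in\{1\}$, hence $A=1$, again impossible. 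I would present the direct computation since it is self-contained and transparent.
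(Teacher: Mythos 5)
Your proposal is correct and takes essentially the same route as the paper's proof: assume $h(A)=a/b$ in lowest terms, use $(\ref{h1})$ to derive the relation $a^{bn}n^a=b^{bn}$, conclude $a=1$ from coprimality, and then obtain a contradiction via the size estimate $b^{bn}\ge 2^{2n}>n$ for $b\ge 2$ (with $b=1$ ruled out since $n\ge 2$), after which Lemma~\ref{lma3} finishes. The only cosmetic difference is that you verify $A\in[e^{-e},e^{1/e}]$ inline, whereas the paper defers that check to the remark following the proposition.
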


\begin{rem*}\label{remark2}
Let $f(x)=x^x$ on $(0,1)$.  From the logarithmic derivative, $f'(x)=x^x(\log x+1)$ holds. Therefore $f(e^{-1})=e^{-1/e}$ is the minimum value of $f$ on $(0,1/2)$. It follows that $e^{-1/e}\leq f(x)\leq 1$, which implies that $h(f(x))$ is convergent for every $x\in (0,1)$. Hence $h((1/n)^{1/n})$ can be defined for all $n\geq 2$. 
\end{rem*}

\begin{proof}[Proof of Proposition~\ref{main2}]
Fix $n\geq 2$. From Lemma~\ref{lma3}, it is sufficient to show that  $h(1/\sqrt[n]{n})$ is not rational. Thus we assume that  $h(1/\sqrt[n]{n})$ is rational. It can be written as $h(1/\sqrt[n]{n})=a/b$ for some relatively prime positive integers $a,b$. From (\ref{h1}), it follows that
\[
	\left(\frac{a}{b}\right)^{\frac{b}{a}}=\left(\frac{1}{n}\right)^{\frac{1}{n}},
\]
which implies that $a^{bn}=1$ and $b^{bn}=n^a$. Thus $a=1$ holds. Since $n\neq1$, we have $b\neq 1$. Therefore it is obtained that
\[
n< 2^n< 2^{2n}\leq b^{bn}=n.
\]
This is a contradiction.
\end{proof}

It is well known that $h(\sqrt{2})=2$. Indeed we see that $\sqrt{2}\in [e^{-e},e^{1/e}]$ from the calculation, and $h(\sqrt{2})^{1/h(\sqrt{2})}=\sqrt{2}$. Here $2^{1/2}=\sqrt{2}$ also holds. Therefore we have $h(\sqrt{2})=2$ from Lemma~\ref{lma4}. On the other hand, $h(1/\sqrt{2})$ is transcendental from Proposition~\ref{main2} with $n=2$.

\section*{Acknowledgements}
The second author would like to thank Koshi Furuzono and Naru Yamaguchi for discussing and advising.
This work was supported by JSPS KAKENHI Grant Numbers 19J20878 and 19J10705.


\begin{thebibliography}{99}
\bibitem[A]{Anderson} J. Anderson, Iterated exponentials, \textit{Amer. Math. Monthly}, \textbf{111} (2004), 668--679.


\bibitem[BR]{br}
I.~N.~Baker and P.~J.~Rippon, Convergence of infinite exponentials, \emph{Annales Academiae Scientiarum Fennicae}. {\bf 8}, (1983), 179--186.

\bibitem[B]{Barrow}
D.~F.~Barrow,
Infinite exponentials, \emph{Amer. Math. Monthly}, {\bf 43}, (1936), 150--160. 

\bibitem[C]{c07}
A.~Carlsson,
Om itererade funktioner, \emph{Ph. D. thesis, Uppsala}, (1907).

\bibitem[G]{ga05}
I.N.~Galidakis, On an application of Lambert's W function to infinite exponentials, Complex Variables, \emph{Theory and Application}, {\bf 49}, (2005), 759--780.

\bibitem[Ge1]{Gelfond1}
O.A.~Gelfond, Sur le septi\'eme Probl\'eme de D. Hilbert.  \emph{Comptes Rendus Acad. Sci. URSS Moscou}, {\bf 2}, (1934), 1--6. 

\bibitem[Ge2]{Gelfond2}
O.A.~Gelfond,  "Sur le septi\'eme Probl\'eme de Hilbert.  \emph{Bull. Acad. Sci. URSS Leningrade}, {\bf 7},(1934), 623--634.

\bibitem[Go]{Goldbach} C. Goldbach, Letter to Daniel Bernoulli, in Correspondance math\'ematique et physique de quelles c\'el\`ebres g\'eom\`etres du XVIII si\`ecle, vol. 2, P.-H. von Fuss, ed., Imperial Academy of Sciences, St. Peters-burg, (1843), pp. 280--281.

\bibitem[H]{Hurwitz} S. Hurwitz, On the rational solutions of $m^n=n^m$  with $m \neq n$, \textit{Amer. Math. Monthly}, {\bf 74} (1967) 298--300.


\bibitem[L]{Liouville}
J.~Liouville, Sur des classes tr\'es-\`etendues de quantit\'es dont la valeur n'est ni alg\'ebrique, ni m\^eme r\'eductible \`a des irrationelles alg\'ebriques,  \emph{J. Math. pures appl.}, {\bf 15},(1850), 133--142.

\bibitem[MV]{MontgomeryVaughan}
H.L.~Montgomery and R.C.~Vaughan, Multiplicative Number Theory I. Classical Theory, Cambridge Studies in Advanced Mahtematics, Cambridge University press, 2007.

\bibitem[NR]{NicolasRobin}
J.L.~Nicolas and G.~Robin, Majorations explicites pour le nombre de diviseurs de N, \emph{Canad. Math. Bull.}, {\bf 26}
,(1983), 485--492.

\bibitem[RS]{RosserSchoenfeld}
J. B.~ Rosser and L.~Schoenfeld, Approximate formulas for some functions of prime numbers, \emph{Illinois J. Math.}, {\bf 6}, (1962), 64--94.

\bibitem[S1]{Schneider1}
T.~Schneider, Transzendenzuntersuchungen periodischer Funktionen. I, \emph{J. reine angew. Math.},{\bf 172},(1934), 65--69, .

\bibitem[S2]{Schneider2}
T.~Schneider,  Transzendenzuntersuchungen periodischer Funktionen. II, \emph{J. reine angew. Math.}, {\bf 172},(1934), 70--74.

\bibitem[SM]{SondowMarques}
J.~Sondow, and D.~Marques,
Algebraic and transcendental solutions of some exponential equations, \emph{Ann. Math. Inform.}, {\bf 37}, (2010), 151--164.

\bibitem[Va]{va}A. Van Der Poorten,  Transcendental entire functions mapping every algebraic number field into itself. \emph{Journal of the Australian Mathematical Society}, {\bf 8}, (1968), 192--193. 
\end{thebibliography}
\end{document}